\def\subsection{\@startsection{subsection}{2}%
  \z@{.5\linespacing\@plus.7\linespacing}
{.5\baselineskip}%
  {\normalfont\centering\scshape}%
}
\newcommand{\nc}{\newcommand}
\nc{\les}{\lesssim}
\nc{\nit}{\noindent}
\nc{\nn}{\nonumber}
\nc{\D}{\partial}
\nc{\diff}[2]{\frac{d #1}{d #2}}
\nc{\diffn}[3]{\frac{d^{#3} #1}{d {#2}^{#3}}}
\nc{\pdiff}[2]{\frac{\partial #1}{\partial #2}}
\nc{\pdiffn}[3]{\frac{\partial^{#3} #1}{\partial{#2}^{#3}}}
\nc{\abs}[1] {\lvert #1 \rvert}
\nc{\cAc}{{\cal A}_c}
\nc{\cE}{{\cal E}}
\nc{\cF}{{\cal F}}
\nc{\cP}{{\cal P}}
\nc{\cV}{{\cal V}}
\nc{\cQ}{{\cal Q}}
\nc{\cGin}{{\cal G}_{\rm in}}
\nc{\cGout}{{\cal G}_{\rm out}}
\nc{\cO}{{\cal O}}
\nc{\Lav}{{\cal L}_{\rm av}}
\nc{\cL}{{\cal L}}
\nc{\cB}{{\cal B}}
\nc{\cZ}{{\cal Z}}
\nc{\cR}{{\cal R}}
\nc{\cT}{{\cal T}}
\nc{\cY}{{\cal Y}}
\nc{\cX}{{\cal X}}
\nc{\cXT}{{{\cal X}(T)}}
\nc{\cBT}{{{\cal B}(T)}}
\nc{\vD}{{\vec \mathcal{D}}}
\nc{\efield}{\mathcal{E}}
\nc{\vE}{{\vec \efield}}
\nc{\vB}{{\vec \mathcal{B}}}
\nc{\vH}{{\vec \mathcal{H}}}
\nc{\mR}{\mathcal R}
\nc{\mF}{\mathcal F}
\nc{\mE}{\mathcal E}
\nc{\ty}{{\tilde y}}
\nc{\tu}{{\tilde u}}
\nc{\tV}{{\tilde V}}
\nc{\Pc}{{\bf P_c}}
\nc{\bx}{{\bf x}}
\nc{\bX}{{\bf X}}
\nc{\bXYZ}{{\bf XYZ}}
\nc{\bY}{{\bf Y}}
\nc{\bF}{{\bf F}}
\nc{\bS}{{\bf S}}
\nc{\dV}{{\delta V}}
\nc{\dE}{{\delta E}}
\nc{\TT}{{\Theta}}
\nc{\dPsi}{{\delta\Psi}}
\nc{\order}{{\cal O}}
\nc{\Rout}{R_{\rm out}}
\nc{\eplus}{e_+}
\nc{\eminus}{e_-}
\nc{\epm}{e_\pm}
\nc{\sgn}{\text{sgn}}
\nc{\eps}{\varepsilon}
\nc{\vnabla}{{\vec\nabla}}
\nc{\G}{\Gamma}
\nc{\w}{\omega}
\nc{\mh}{h}
\nc{\mg}{g}
\nc{\vphi}{\varphi}
\nc{\tlambda}{\tilde\lambda}
\nc{\be}{\begin{equation}}
\nc{\ee}{\end{equation}}
\nc{\ba}{\begin{eqnarray}}
\nc{\ea}{\end{eqnarray}}
\nc{\g}{\gamma}
\nc{\ol}{\overline}
\newtheorem{theorem}{Theorem}[section]
\newtheorem{lemma}[theorem]{Lemma}
\newtheorem{prop}[theorem]{Proposition}
\newtheorem{corollary}[theorem]{Corollary}
\newtheorem{asmp}[theorem]{Assumption}
\nc{\pT}{\partial_T}
\nc{\pz}{\partial_z}
\nc{\pt}{\partial_t}
\nc{\la}{\langle}
\nc{\ra}{\rangle}
\nc{\infint}{\int_{-\infty}^{\infty}}
\nc{\halfwidth}{6.5cm}
\nc{\figwidth}{10cm}
\newcommand{\f}{\frac}
\nc{\nlayers}{L} \nc{\nsectors}{M}
\nc{\indicator}{\mathbf{1}}
\nc{\Rhole}{R_{\rm hole}}
\nc{\Rring}{R_{\rm ring}}
\nc{\neff}{n_{\rm eff}}
\nc{\Frem}{F_{\rm rem}}
\nc{\R}{\mathbb R}
\nc{\mJ}{\mathcal J}
\nc{\C}{\mathbb C}
\nc{\Z}{\mathbb Z}
\nc{\N}{\mathbb N}
\nc{\DD}{\Delta}
\nc{\cD}{\mathcal D}
\nc{\lnorm}{\left\|}
\nc{\rnorm}{\right\|}
\nc{\rnormp}{\right\|_{\ell^{p,\eps}}}
\nc{\rar}{\rightarrow} 
\newcommand{\norm}[1]{\left\lVert#1\right\rVert}
\begin{document}
	\DeclarePairedDelimiter{\gen}{\langle}{\rangle} 
	
 	\begin{abstract}
	 	We consider the higher order Schr\"odinger operator $H=(-\Delta)^m+V(x)$ in $n$ dimensions with real-valued potential $V$ when $n>4m$, $m\in \mathbb N$.    We adapt our recent results for $m>1$  to show that when $H$ has a threshold eigenvalue the wave operators are bounded on $L^p(\R^n)$ for the natural range $1\leq p<\frac{n}{2m}$ in both even and odd dimensions.  The approach used works without distinguishing even and odd cases, and matches the range of boundedness in the classical case when $m=1$.	The proof applies in the classical $m=1$ case as well and simplifies the argument.	 
	\end{abstract}
	
	\title[Wave operators for higher order  Schr\"odinger operators]{\textit{$L^p$-continuity of wave operators for higher order Schr\"odinger operators with threshold eigenvalues in high dimensions} } 
	
	\author[M.~B. Erdo\smash{\u{g}}an, W.~R. Green, K. LaMaster]{M. Burak Erdo\smash{\u{g}}an, William~R. Green, Kevin LaMaster}
	\subjclass{Primary: 35J30, 47A40, 81Q10.}
	\keywords{wave operators, higher order Schr\"odinger, $L^p$-continuity, eigenvalue}
	\thanks{ The first and third authors were partially supported by the NSF grant  DMS-2154031. The second author is partially supported by Simons Foundation
Grant 511825.  }
	\address{Department of Mathematics \\
		University of Illinois \\
		Urbana, IL 61801, U.S.A.}
	\email{berdogan@illinois.edu}
	\address{Department of Mathematics\\
		Rose-Hulman Institute of Technology \\
		Terre Haute, IN 47803, U.S.A.}
	\email{green@rose-hulman.edu}
	\address{Department of Mathematics \\
		University of Illinois \\
		Urbana, IL 61801, U.S.A.}
	\email{kevinl17@illinois.edu}

	\maketitle

\section{Introduction}

We continue the study of wave operators for higher order Schr\"odinger operators related to equations of the form
\begin{align*}
	i\psi_t =(-\Delta)^m\psi +V\psi, \qquad x\in \R^n, \quad  m\in \mathbb N.
\end{align*}
Here $V$ is a real-valued potential with polynomial decay, $|V(x)|\les \la x\ra^{-\beta}$ for some sufficiently large $\beta>0$, and some smoothness conditions, see \cite{EGWaveOp} or Assumption~\ref{asmp:Fourier} below.    Note that when $m=1$ this is the classical Schr\"odinger equation.  We consider the case  when $(-\Delta)^m+V$ has an eigenvalue at zero energy in high dimensions $n>4m$.  It is well known that there are no threshold obstructions other than eigenfunctions when $n>4m$.

Denote the free operator by $H_0=(-\Delta)^m$ and the perturbed operator by $H=(-\Delta)^m+V$.   We study the $L^p$ boundedness of the wave operators, which are defined by
$$
W_{\pm}=s\text{\ --}\lim_{t\to \pm \infty} e^{itH}e^{-itH_0}.
$$
For the class  of potentials   we consider, the wave operators exist and are asymptotically complete,  \cite{RS,ScheArb,agmon,Hor,Sche}.   Furthermore,  the intertwining identity
$$
f(H)P_{ac}(H)=W_\pm f((-\Delta)^m)W_{\pm}^*
$$
holds for these potentials where $P_{ac}(H)$ is the projection onto the absolutely continuous spectral subspace of $H$, and $f$ is any Borel function.  The intertwining identity and $L^p$ continuity of the wave operators allows one to obtain $L^p$-based mapping properties of operators of the form $f(H)P_{ac}(H)$ from those of the much simpler operators $f((-\Delta)^m)$.

As usual, we begin with the stationary representation of the wave operators
\begin{align}\label{eqn:stat rep}
	W_+u
	&=u-\frac{1}{2\pi i} \int_{0}^\infty \mR_V^+(\lambda) V [\mR_0^+(\lambda)-\mR_0^-(\lambda)] u \, d\lambda,
\end{align}
where $\mR_V(\lambda)=((-\Delta)^{m}+V-\lambda)^{-1}$, $\mR_0(\lambda)=((-\Delta)^m-\lambda)^{-1}$, and the `+' and `-' denote the usual limiting values as $\lambda$ approaches the positive real line from above and below, \cite{agmon,soffernew}.  As in previous works, \cite{EGWaveOp,EGWaveOpExt}, we consider $W_+$, bounds for $W_-$ follow by conjugation since $W_-=\mathcal C W_+ \mathcal C $, where $\mathcal Cu(x)=\overline{u}(x)$.  Since the identity operator is bounded on all $L^p$ spaces, it suffices to control the contribution of the integral involving the resolvent operators.

In dimensions $n>4m$, there are no resonances at the threshold (zero energy), \cite{soffernew}.  This mirrors the case of dimensions $n>4$ in the classical ($m=1$) Schr\"odinger operator where threshold resonances cannot exist. In the classical case, the existence of threshold eigenvalues limits the upper range of $L^p(\R^n)$ boundedness of the wave operators, generically to $1\leq p<\frac{n}{2}$, \cite{YajNew,GGwaveop,YajNew2}.  Here we prove the analogous result for the higher order Schr\"odinger operators in Theorem~\ref{thm:full} below.  Our proof doesn't distinguish between $m=1$ and $m>1$, and hence applies to the classical case as well where it simplifies the existing arguments.

Our main result is to control the low energy portion of the evolution when there is a threshold eigenvalue.  Our strategy builds on the approach in \cite{EGWaveOpExt}, extending the argument to control the singularities in the spectral parameter caused by the eigenvalue.
Using resolvent identity, one has
$$
\mR_V^+=\sum_{j=0}^{2k-1} (-1)^j \mR_0^+ (V\mR_0^+)^j +(\mR_0^+V)^{k} \mR_V^+ (V\mR_0^+)^k.
$$ 
The contribution of the $j$th term of the finite sum to \eqref{eqn:stat rep} is denoted by $W_j$ and the contribution of the remainder by $W_{r,k}$.  $W_j$ is unaffected by the existence of threshold obstructions, while $W_{r,k}$ is affected only when $\lambda$ is in a neighborhood of zero.   To make this more precise, take a smooth cut-off function $\chi\in C^\infty_0$  for a sufficiently small neighborhood of zero, and let $\widetilde \chi=1-\chi$ be the complementary cut-off away from a neighborhood of zero. Define
$$
W_{low,k}u= \frac{1}{2\pi i}  \int_{0}^\infty \chi(\lambda) (\mR_0^+(\lambda) V)^{k} \mR_V^+(\lambda)  (V\mR_0^+(\lambda) )^k V [\mR_0^+(\lambda)-\mR_0^-(\lambda)] u \, d\lambda,
$$
$$
W_{high,k}u= \frac{1}{2\pi i}  \int_{0}^\infty \widetilde\chi(\lambda) (\mR_0^+(\lambda) V)^{k} \mR_V^+(\lambda)  (V\mR_0^+(\lambda) )^k V [\mR_0^+(\lambda)-\mR_0^-(\lambda)] u \, d\lambda.
$$
Throughout the paper, we write $\la x\ra$ to denote $  (1+|x|^2)^{\f12}$, $A\les B$ to say that there exists a constant $C$ with $A\leq C B$, and write $a-:=a-\epsilon$ and $a+:=a+\epsilon$ for some $\epsilon>0$, and $\lceil c\rceil$ is the ceiling of $c$, the least integer greater than or equal to $c$.
Our main technical result is
\begin{theorem}\label{thm:main low}
	Let $n>4m\geq 4$. 
	Assume that $|V(x)|\les \la x\ra^{-\beta}$, where $V$ is a real-valued potential on $\R^n$ and $\beta>n+4$ when $n$ is odd and $\beta>n+3$ when $n$ is even.  If $H=(-\Delta)^m+V(x)$ has an eigenvalue at zero, but no positive
	eigenvalues, then $W_{low,k}$ extends to a bounded operator on $L^p(\R^n)$ for all $1\leq p<\frac{n}{2m}$ provided that $k$ is sufficiently large.
\end{theorem}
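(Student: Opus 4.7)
The plan is to build on the approach of \cite{EGWaveOpExt}, inserting an extra Laurent decomposition of $\mR_V^+(\lambda)$ near the threshold to isolate the contribution of the eigenvalue. Since $n>4m$ rules out zero-energy resonances, a Jensen--Nenciu (Birman--Schwinger) analysis of the symmetric resolvent identity yields an expansion
\[
\mR_V^+(\lambda)=-\lambda^{-1}P_e+Q^+(\lambda),\qquad 0<\lambda\ll 1,
\]
where $P_e$ is the spectral projection onto $\ker H$ and $Q^+(\lambda)$ is a ``regular'' remainder whose low-energy behavior is comparable to that of a weighted free resolvent. Substituting this decomposition into the integral defining $W_{low,k}$ gives a splitting $W_{low,k}=W_{low,k}^{\rm reg}+W_{low,k}^{\rm sing}$.

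For the regular piece $W_{low,k}^{\rm reg}$, I would apply the estimates of \cite{EGWaveOpExt} essentially unchanged: provided $k$ is taken sufficiently large, the factors $(\mR_0^+V)^k$ and $(V\mR_0^+)^k$ smooth $Q^+(\lambda)V[\mR_0^+(\lambda)-\mR_0^-(\lambda)]$ enough that the $\lambda$-integral produces a kernel dominated by a bounded convolution-type operator on $L^p(\R^n)$ for $1\leq p<\frac{n}{2m}$. The decay hypotheses $\beta>n+4$ (odd $n$) and $\beta>n+3$ (even $n$) are precisely those required for the kernel estimates in \cite{EGWaveOpExt}.

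The new element is the singular piece
\[
W_{low,k}^{\rm sing}u=-\frac{1}{2\pi i}\int_0^\infty \chi(\lambda)\lambda^{-1}(\mR_0^+(\lambda) V)^k P_e (V\mR_0^+(\lambda))^k V[\mR_0^+(\lambda)-\mR_0^-(\lambda)]u\,d\lambda,
\]
which I would treat using the cancellation supplied by the eigenvalue relation. For any $\phi\in{\rm Ran}\,P_e$, writing $V\phi=-(-\Delta)^m\phi=-(H_0-\lambda)\phi-\lambda\phi$ yields both
\[
\mR_0^\pm(\lambda)V\phi=-\phi-\lambda\mR_0^\pm(\lambda)\phi
\qquad\text{and}\qquad
[\mR_0^+(\lambda)-\mR_0^-(\lambda)]V\phi=-\lambda[\mR_0^+(\lambda)-\mR_0^-(\lambda)]\phi.
\]
The first identity, iterated $k$ times on each side (via duality on the right), expands $(\mR_0^\pm V)^k\phi$ as $(-1)^k\phi$ plus finitely many remainders each carrying a positive power of $\lambda$; the second identity supplies the single extra $\lambda$ factor that absorbs the outer $\lambda^{-1}$ singularity. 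Combining these, $W_{low,k}^{\rm sing}$ reduces to an operator of the form $u\mapsto\sum_j \phi_j\langle u,G_j\rangle$ where each $G_j$ is a $\chi(\lambda)$-weighted spectral average applied to an eigenfunction, plus a sum of finitely many lower-order corrections of the same general form. Boundedness on $L^p(\R^n)$ for $1\leq p<\frac{n}{2m}$ then follows from the polynomial decay of the eigenfunctions $\phi_j$, which is a standard elliptic regularity consequence of the decay of $V$.

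The main obstacle I foresee is the careful bookkeeping of the many correction terms produced by the $k$-fold iteration: each remainder contains free resolvents $\mR_0^\pm(\lambda)$ acting on eigenfunctions or on the source $u$, and the $\lambda$-factors that absorb the $\lambda^{-1}$ singularity must be tracked uniformly so that the resulting kernels are both bounded in $\lambda$ and integrable near $\lambda=0$, uniformly in the parity of $n$. These kernel estimates are of the same flavor as those used for $W_{low,k}^{\rm reg}$, so once the accounting is set up, the two pieces combine to give the stated $L^p$ boundedness for $k$ sufficiently large.
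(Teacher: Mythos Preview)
Your decomposition differs from the paper's: the paper never separates the Laurent pole from the remainder of $\mR_V^+$. Instead it passes to the symmetric resolvent identity, writes $W_{low,k}$ in terms of $\Gamma_k(\lambda)=UvA(\lambda)v\,[\lambda^{2m}M^{-1}(\lambda)]\,vA(\lambda)vU$, and proves in one stroke (via a Jensen--Nenciu inversion of $M$) that $\lambda^{2m}M^{-1}(\lambda)=O_{\lceil n/2\rceil+1}(\lambda^0)$. This feeds directly into a single kernel proposition (Proposition~\ref{lem:low tail low d}) whose proof relies on the oscillatory factor $e^{i\lambda|x-z_1|}$ coming from the outermost $\mR_0^+$ and on integration by parts in $\lambda$; the piece $K_3$ of the kernel is what forces $p<n/(2m)$, and all other pieces are admissible.

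Your plan has a genuine gap at the lower endpoint. After the algebraic reduction of $W_{low,k}^{\rm sing}$ you arrive at rank-one pieces of the form $u\mapsto \phi_j\,\langle u,G_j\rangle$ (plus corrections of the same type). But a zero-energy eigenfunction of $H$ in dimensions $n>4m$ obeys $\phi_j=-G_0V\phi_j$ and hence generically $|\phi_j(x)|\sim c\,|x|^{2m-n}$ with $c=a_0\int V\phi_j\neq 0$; this is \emph{not} in $L^p$ for $p\le \frac{n}{n-2m}$. Likewise $G_j$, which you identify as a low-energy spectral average of $\phi_j$, inherits the same decay and fails to lie in $L^{p'}$ for $p\ge \frac{n}{2m}$. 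So the rank-one operator $\phi_j\otimes G_j$ is bounded on $L^p$ only for $\frac{n}{n-2m}<p<\frac{n}{2m}$, not on $[1,\frac{n}{2m})$. The ``polynomial decay of the eigenfunctions'' is therefore not enough, and the corrections you list cannot repair this, since the leading output $(-1)^k\phi_j$ is $\lambda$-independent and survives the $\lambda$-integral intact. The paper avoids this obstruction precisely by \emph{not} collapsing the $x$-side to $\phi_j$: it keeps the factor $\mR_0^+(\lambda^{2m})(x,z_1)$ with its phase $e^{i\lambda|x-z_1|}$, and the integration-by-parts gains in $\lambda$ (regions $K_2$ and $K_4$ of Proposition~\ref{lem:low tail low d}) convert that oscillation into the decay in $x$ needed for admissibility. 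If you want to salvage your splitting, you must treat the $x$-side of $W_{low,k}^{\rm sing}$ as a kernel (retaining the outer $\mR_0^+$) rather than as a fixed vector $\phi_j$; at that point the required estimates are exactly those of the paper's kernel proposition. A secondary gap is the assertion that $Q^+(\lambda)$ can be handled ``essentially unchanged'' by \cite{EGWaveOpExt}: this requires $\lambda^\ell\partial_\lambda^\ell Q^+$ to be bounded for $0\le\ell\le\lceil n/2\rceil+1$, which again comes down to the Jensen--Nenciu bounds on $M^{-1}$ that you have not supplied.
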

We need sufficiently large $k$ when $n> 4m$ due to local singularities of the free resolvents that are not  square integrable.  The main novelty is that the arguments are fairly streamlined, we avoid long operator-valued expansions of the perturbed resolvent by adapting the methods in \cite{EGWaveOp,EGWaveOpExt} to control the singularity as $\lambda \to 0$ that occurs when there is a zero energy eigenvalue.  We further note that no additional decay is needed on the potential compared to the regular case, \cite{EGWaveOp}.

To put this result in the context, recall  the first $L^p$ boundedness result in the seminal paper of Yajima, \cite{YajWkp1}, for $m=1$ and $1\leq p\leq \infty$ for small potentials.  For large potentials, the main difficulty is in controlling the contribution of $W_{low, k}$.  The behavior of this operator differs in even and odd dimensions. In \cite{YajWkp1,YajWkp2,YajWkp3}, Yajima removed smallness or positivity assumptions on the potential for all dimensions $n\geq 3$.  These arguments were simplified and Yajima further considered the effect of zero energy eigenvalues and/or resonances in \cite{Yaj} when $n$ is odd and with Finco in \cite{FY} when $n$ is even for $n>4$ to establish boundedness of the wave operators when $\frac{n}{n-2}<p<\frac{n}{2}$.  These results were further extended to show that the range of $p$ is generically $1\leq p<\frac{n}{2}$ in the presence of a zero energy eigenvalue, and that the upper range of $p$ may be larger under certain orthogonality conditions, \cite{YajNew,GGwaveop,YajNew2}.

We now give more details in the case $m>1$ to state the new corollary of our result above on the $L^p$ boundedness of wave operators.  
Let  $\mF({f})$ denote  the Fourier transform of $f$.
\begin{asmp}\label{asmp:Fourier}
	Fix $n>4m$ and $m\geq 1$.
	For some $0<\delta \ll 1$, $\sigma>\frac{2n-4m}{n-1-\delta}+\delta$, assume that the real-valued potential $V$ satisfies the condition
	$$\big\|\mathcal F(\la \cdot \ra^{\sigma} V(\cdot))\big\|_{L^{ \frac{n-1-\delta}{n-2m-\delta} }}<\infty.$$ 
	
\end{asmp}
In \cite{EGWaveOp}, by adapting Yajima's $m=1$ argument in \cite{YajWkp1}, the first two authors showed that the contribution of the terms of the Born series may be bounded by
\begin{align*}
	\|W_j\|_{L^p\to L^p}\leq C^j   \big\|\mathcal F(\la \cdot \ra^{\sigma} V(\cdot))\big\|_{L^{ \frac{n-1-\delta}{n-2m-\delta} }}^j, 
\end{align*}	
for some constant $C>0$.
In addition, it was shown that if $|V(x)|\les \la x\ra^{-\beta}$ for some $\beta>n+5$ when $n$ is odd and $\beta>n+4$ when $n$ is even and if $k$ is sufficiently large (depending on $m$ and $n$), then $W_{high,k}$ is a bounded operator on $L^p$ for all $1\leq p\leq \infty$ provided there are no positive eigenvalues.  The absence of positive eigenvalues is a common assumption for higher order operators since there may be positive eigenvalues even for smooth, compactly supported potentials, see \cite{soffernew}.

Combining these facts with Theorem~\ref{thm:main low}, we have the following   result.
\begin{corollary}\label{thm:full} 
	Fix  $m\geq 1$ and let $n>4m$. 
	Assume that  $V$  satisfies Assumption~\ref{asmp:Fourier} and in addition
	\begin{enumerate}[i)]
		\item $|V(x)|\les \la x\ra^{-\beta}$  for some $\beta>n+5$ when  $n$ is odd and 
		for some  $\beta>n+4$  when $n$ is even,
		\item $H=(-\Delta)^m+V(x)$ has an eigenvalue at zero energy, but no positive
		eigenvalues. 
	\end{enumerate}
	Then,	the wave operators extend to bounded operators on $L^p(\R^n)$ for all $1\leq p< \frac{n}{2m}$.  
	
\end{corollary}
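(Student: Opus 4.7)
The plan is to assemble the $L^p$ bound on $W_+$ from three separate pieces that are each already available: the finite Born sum $\sum_{j=0}^{2k-1} W_j$ and the high-energy remainder $W_{high,k}$, handled in \cite{EGWaveOp}, together with the low-energy remainder $W_{low,k}$ controlled by Theorem~\ref{thm:main low} above. The bound on $W_-$ then follows from $W_- = \mathcal C W_+ \mathcal C$ and the fact that complex conjugation is an $L^p$-isometry, so I focus exclusively on $W_+$.

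First I insert the symmetrized resolvent identity
$$\mR_V^+ = \sum_{j=0}^{2k-1} (-1)^j \mR_0^+ (V \mR_0^+)^j + (\mR_0^+ V)^k \mR_V^+ (V \mR_0^+)^k$$
into the stationary formula \eqref{eqn:stat rep}, rewriting $W_+ - I = \sum_{j=0}^{2k-1} W_j + W_{r,k}$, and split the remainder as $W_{r,k} = W_{low,k} + W_{high,k}$ using $\chi$ and $\widetilde\chi = 1-\chi$ near the origin. The Born-series estimate of \cite{EGWaveOp} holds under Assumption~\ref{asmp:Fourier} and yields $\|W_j\|_{L^p \to L^p} \les C^j \|\mathcal F(\la \cdot \ra^\sigma V)\|_{L^{(n-1-\delta)/(n-2m-\delta)}}^j$ for all $1 \le p \le \infty$. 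The pointwise decay hypothesis (i) is at least as strong as the decay condition required for the high-energy result in \cite{EGWaveOp}, and (ii) supplies the no-positive-eigenvalue assumption used there, so $W_{high,k}$ is bounded on every $L^p(\R^n)$, $1 \le p \le \infty$, once $k$ is large enough. Similarly, (i) implies the (slightly weaker) decay condition of Theorem~\ref{thm:main low}, which then yields boundedness of $W_{low,k}$ on $L^p(\R^n)$ for $1 \le p < \frac{n}{2m}$ once $k$ is sufficiently large.

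The only subtlety is the choice of $k$: both the high-energy result of \cite{EGWaveOp} and Theorem~\ref{thm:main low} require $k$ sufficiently large in a manner depending on $m$ and $n$, and the two thresholds need not coincide. I would simply take $k$ to be their maximum. Summing the identity operator, the finite Born sum, $W_{low,k}$, and $W_{high,k}$ then gives boundedness of $W_+$ on $L^p(\R^n)$ for $1 \le p < \frac{n}{2m}$, which is the claim. No genuine analytical obstacle arises at this step: the hard work is already concentrated in Theorem~\ref{thm:main low}, and the corollary is a bookkeeping exercise reconciling the three sets of hypotheses imposed on $V$, $k$, and the spectrum of $H$.
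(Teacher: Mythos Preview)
Your proposal is correct and follows exactly the approach the paper indicates: the corollary is obtained by combining the Born-series bounds and the high-energy remainder bounds from \cite{EGWaveOp} with Theorem~\ref{thm:main low} for $W_{low,k}$, after choosing $k$ large enough for both and noting that hypothesis (i) dominates the decay requirements of all three ingredients. The paper does not write out a formal proof of this corollary, but your write-up is precisely the ``bookkeeping'' it has in mind.
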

By applying the intertwining identity and the known $L^p\to L^{p'}$ dispersive bounds when $p'$ is the H\"older conjugate of $p$ for the free solution operator $e^{-it(-\Delta)^m}$, we obtain the corollary below. 
\begin{corollary}
	Under the assumptions of Corollary~\ref{thm:full}, for $\frac{n}{n-2m}< p'\leq2$ we obtain the dispersive estimates
	$$
	\|e^{-itH}P_{ac}(H)f\|_{p} \les |t|^{-\frac{n}{ m}(\frac{1}{p'}-\frac{1}{2}) } \|f\|_{p'}.
	$$
	
\end{corollary}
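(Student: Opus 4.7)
The plan is to derive the perturbed dispersive bound from Corollary~\ref{thm:full} and the intertwining identity stated in the introduction, combined with the known free dispersive estimate for $e^{-it(-\Delta)^m}$. Applying the intertwining identity with the Borel function $f(\lambda)=e^{-it\lambda}$ yields
$$e^{-itH} P_{ac}(H) = W_+\, e^{-it(-\Delta)^m}\, W_+^*,$$
so a bound on the perturbed propagator reduces to bounding a composition of three factors.

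For the middle factor I would invoke the standard free higher order dispersive estimate
$$\|e^{-it(-\Delta)^m} g\|_p \les |t|^{-\frac{n}{m}\left(\frac{1}{p'}-\frac{1}{2}\right)} \|g\|_{p'}, \qquad \tfrac{n}{n-2m} < p' \le 2,$$
obtained by stationary phase on the oscillatory kernel with symbol $e^{-it|\xi|^{2m}}$ combined with Stein interpolation against the $L^2$ isometry; the lower cut-off on $p'$ reflects the well-known failure of the $L^1\to L^\infty$ endpoint when $m>1$. For the outer factors, Corollary~\ref{thm:full} supplies that $W_+$ is bounded on $L^p$ for every $1\le p< \frac{n}{2m}$, and dualizing against the $L^2$ pairing shows that $W_+^*$ is bounded on $L^{p'}$ for every $\frac{n}{n-2m} < p' \le \infty$. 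Observe that the wave operator range $p<\frac{n}{2m}$ is precisely the H\"older conjugate of the free dispersive range $p'>\frac{n}{n-2m}$, so the composition is loss-free.

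Chaining the three bounds for $p'\in(\tfrac{n}{n-2m},2]$ and its H\"older conjugate $p\in[2,\tfrac{n}{2m})$ produces
$$\|e^{-itH}P_{ac}(H)f\|_p \les \|e^{-it(-\Delta)^m} W_+^* f\|_p \les |t|^{-\frac{n}{m}(\frac{1}{p'}-\frac{1}{2})}\|W_+^* f\|_{p'} \les |t|^{-\frac{n}{m}(\frac{1}{p'}-\frac{1}{2})}\|f\|_{p'},$$
which is the claimed bound. Since all the real technical weight lives inside Corollary~\ref{thm:full}, the proof here is essentially a bookkeeping exercise. The only point that needs care is matching the three operator ranges; this compatibility is exactly why the hypothesis on $p'$ reads $\frac{n}{n-2m}<p'\le 2$, so no genuine obstacle arises.
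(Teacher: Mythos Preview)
Your proposal is correct and follows exactly the approach the paper indicates: the paper's entire ``proof'' is the one-sentence remark preceding the corollary, namely that the result follows from the intertwining identity together with the known free $L^{p'}\to L^p$ dispersive bound for $e^{-it(-\Delta)^m}$, and your write-up simply spells out that composition $W_+\,e^{-it(-\Delta)^m}\,W_+^*$ and checks the exponents line up. One small remark: the lower restriction $p'>\tfrac{n}{n-2m}$ in the corollary is forced already by the wave-operator range $p<\tfrac{n}{2m}$ from Corollary~\ref{thm:full} (and its dual), rather than by where the free estimate first breaks down, but this does not affect the argument.
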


The study of the $L^p$ boundedness of the wave operators in the higher order $m>1$ case has only recently begun.  In the case when there are no eigenvalues or resonances in the ac spectrum,   the case  $m=2$ and $n=3$ was studied by Goldberg and the second author, \cite{GG4wave}.  The case   $n>2m$ was studied by the first two authors in \cite{EGWaveOp,EGWaveOpExt}.  In \cite{EGG} the first two authors and Goldberg showed that a certain amount of smoothness of the potentials is necessary to control the large energy behavior in the $L^p$ boundedness. 

In \cite{MWY},  Mizutani, Wan, and Yao  considered the case of $m=2$ and $n=1$ showing that the wave operators are bounded when $1<p<\infty$, but not when $p=1,\infty$, where weaker estimates involving the Hardy space or BMO were proven depending on the type of threshold obstruction.  More recently in \cite{GY}, Galtbayar and Yajima considered the case   $m=2$ and $n=4$ showing that the wave operators are bounded on $1<p<\infty$ if zero is regular with restrictions on the upper range of $p$ if zero is not regular depending on the type of resonance at zero.  Mizutani, Wan, and Yao \cite{MWY3d,MWY3d2} studied the endpoint behavior and the effect of zero energy resonances when $m=2$ and $n=3$. This recent work on higher order, $m>1$, Schr\"odinger operators has roots in the work of Feng, Soffer, Wu and Yao \cite{soffernew} which considered time decay estimates between weighted $L^2$ spaces.

The paper is organized as follows.  In Section~\ref{sec:prop pf} we recall important resolvent expansions and  prove Proposition~\ref{lem:low tail low d}, which shows $L^p$ boundedness for a class of operators of the form needed in Theorem~\ref{thm:main low}.  In Section~\ref{sec:resolvents} we prove further resolvent expansions tailored to the case of zero energy eigenvalues to show that Proposition~\ref{lem:low tail low d} applies to the operator $W_{low,k}$ for large enough $k$, which suffices to prove Theorem~\ref{thm:main low} and consequently Theorem~\ref{thm:full}.  Finally in Section~\ref{sec:tech} we provide technical lemmas needed to prove Proposition~\ref{lem:low tail low d}.

\section{Operator Bounds}\label{sec:prop pf} 
In this section we reduce proving Theorem~\ref{thm:main low} to showing that a certain family of operators extend to bounded operators on $L^p(\R^n)$ on the desired range of $p$.  To do this we utilize various resolvent expansions and adapt the argument from the case when zero is regular to account for the extra  singularity in the spectral parameter that arises when $(-\Delta)^{2m}+V$ has a zero energy eigenvalue.

It is convenient to use a change of variables $\lambda\mapsto \lambda^{2m}$ to represent $W_{low,k}$ 
$$
\frac{m}{\pi i}\int_{0}^\infty \chi(\lambda) \lambda^{2m-1} (\mR_0^+(\lambda^{2m}) V)^{k } \mR_V^+(\lambda^{2m})  (V\mR_0^+(\lambda^{2m}) )^k V   [\mR_0^+(\lambda^{2m})-\mR_0^-(\lambda^{2m})]  \, d\lambda.
$$
We begin by using the symmetric resolvent identity on the perturbed resolvent $\mR_V^+(\lambda^{2m})$. With $v=|V|^{\f12}$, $U(x)=1$ if $V(x)\geq 0$ and $U(x)=-1$ if $V(x)<0$, we define $M^+(\lambda)=U+v\mR_0^+(\lambda^{2m})v$.   

Using the symmetric resolvent identity, one has
\be\label{eq:sym cons}
\mR_V^+(\lambda^{2m})V=\mR_0^+(\lambda^{2m})vM^+(\lambda)^{-1}v.
\ee
Therefore, we have
$$
W_{low,k}=	\frac{m}{\pi i}\int_{0}^\infty \chi(\lambda) \lambda^{-1} \mR_0^+(\lambda^{2m}) v\Gamma_k(\lambda) v   [\mR_0^+(\lambda^{2m})-\mR_0^-(\lambda^{2m})]  \, d\lambda,
$$
where $\Gamma_0(\lambda):= \lambda^{2m}M^+(\lambda)^{-1} $ and for $k\geq 1$
\begin{multline}\label{Gammalambda}
	\Gamma_k(\lambda)\\:=Uv\mR_0^+(\lambda^{2m}) \big(V\mR_0^+(\lambda^{2m})\big)^{k-1} v[\lambda^{2m}M^+(\lambda)^{-1}]v \big( \mR_0^+(\lambda^{2m})V\big)^{k-1} \mR_0^+(\lambda^{2m})vU.
\end{multline}
When zero energy is an eigenvalue, the resolvent $\mR_V$ becomes unbounded as $\lambda\to 0$.  Under the change of variables the singularity is of order $\lambda^{-2m}$.  The definition of $\Gamma_k(\lambda)$ above multiplies $M^{-1}(\lambda)$ by $\lambda^{2m}$ to account for this singularity.

To state our main result, we recall the following terminology from previous works involving wave operators and dispersive estimates.  An operator $T:L^2\to L^2$ with integral kernel $T(x,y)$ is absolutely bounded if the operator with kernel $|T(x,y)|$ is bounded as an operator on $L^2(\R^n)$.  We recall that finite rank operators and Hilbert-Schmidt operators are absolutely bounded, where $T$ is Hilbert-Schmidt if
$$
\|T\|_{HS}^2=\int_{\R^{2n}} |T(x,y)|^2\, dx\, dy <\infty.
$$

\begin{prop}\label{lem:low tail low d} 
	Fix  $n>4m\geq 4$  and let $\Gamma$ be a $\lambda $ dependent operator. Assume that 
	$$
	\widetilde \Gamma (x,y):= \sup_{0<\lambda <\lambda_0}\Big[\sup_{0\leq \ell\leq  \lceil \tfrac{n}2\rceil +1  } \big|\lambda^{\ell} \partial_\lambda^\ell \Gamma(\lambda)(x,y)\big| \Big]
	$$
	satisfies the pointwise estimate
	\be\label{eq:tildegamma}
	\widetilde \Gamma (x,y)  \les \la x\ra^{-\frac{n}2-}\la y\ra^{-\frac{n}2-}.
	\ee
	If $|V(x)|\les \la x\ra^{-\beta}$ for some $\beta>n_\star$
	where $n_{\star}=n+4$ if $n$ is odd and $n_{\star}=n+3$ if $n$ is even, then the operator with kernel 
	\be\label{Kdef}
	K(x,y)=\int_0^\infty \chi(\lambda) \lambda^{-1} \big[\mR_0^+(\lambda^{2m}) v \Gamma(\lambda)  v [\mR_0^+(\lambda^{2m}) -\mR_0^-(\lambda^{2m})]\big](x,y)   d\lambda 
	\ee
	is bounded on $L^p(\R^n)$ for $1\leq p<\frac{n}{2m}$.
\end{prop}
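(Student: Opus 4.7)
The plan is to substitute the explicit kernel representations of the free resolvents, reduce the integrand defining $K(x,y)$ to an oscillatory integral in $\lambda$, and then control that integral by integration by parts in $\lambda$ combined with weighted spatial estimates.

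First, I would use the spectral-side identity
\be
[\mR_0^+(\lambda^{2m}) - \mR_0^-(\lambda^{2m})](z,y) = C\lambda^{n-2m}\int_{S^{n-1}} e^{i\lambda(z-y)\cdot\omega}\, d\sigma(\omega),
\ee
together with a partial-fraction decomposition of $\mR_0^+(\lambda^{2m})$ as a linear combination of Helmholtz resolvents at the real and complex roots of $\zeta^{2m}=\lambda^{2m}$. The complex-root terms produce kernels decaying exponentially in $\lambda|x-z|$ and are harmless; the remaining real-root Helmholtz kernel has the standard Bessel/Hankel form, which I would split via a cutoff in $\lambda|x-z_1|$ into a local piece of size $|x-z_1|^{2-n}$ carrying no oscillation and a far-field piece of the form $\lambda^{(n-3)/2}|x-z_1|^{(1-n)/2}e^{i\lambda|x-z_1|}a(\lambda|x-z_1|)$ with smooth symbol $a$.

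After substitution, the leading part of $K(x,y)$ becomes a double spatial integral against $v(z_1)v(z_2)$ of an oscillatory $\lambda$-integral with phase $\lambda[|x-z_1| + (z_2-y)\cdot\omega]$ and amplitude involving a positive power of $\lambda$, bounded symbols, and $\Gamma(\lambda)(z_1,z_2)$. The hypothesis on $\widetilde\Gamma$ is exactly what lets me integrate by parts in $\lambda$ up to $\lceil n/2\rceil+1$ times without losing the weighted decay in $z_1,z_2$, producing factors of $[|x-z_1| + (z_2-y)\cdot\omega]^{-1}$; integrating the result in $\omega\in S^{n-1}$ yields the standard stationary-phase decay of the Fourier transform of the surface measure, while the spatial decay $\la z_j\ra^{-n/2-}|V(z_j)|^{1/2}$ makes the $z_1,z_2$ integrals converge. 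The kernel $K(x,y)$ is then dominated pointwise by the sum of a near-diagonal Riesz-potential-type term on the order of $|x-y|^{2m-n}$ against integrable weights and a far-field oscillatory-integral term; an $L^1\to L^1$ Schur test combined with Hardy--Littlewood--Sobolev (together with its weighted variants, to be extracted from the technical lemmas of Section~\ref{sec:tech}) interpolates to give $L^p\to L^p$ boundedness for the full range $1\leq p < n/(2m)$.

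The main obstacle will be the local singularity of $\mR_0^+(\lambda^{2m})(x,z)$ in dimensions $n>4m$, where the resolvent kernel fails to be locally square-integrable near the diagonal. This is precisely the reason Theorem~\ref{thm:main low} requires large $k$: iterating $(V\mR_0^+)^k$ in the definition of $\Gamma_k(\lambda)$ in \eqref{Gammalambda} convolves repeatedly against the polynomially decaying potential and gradually regularises these singularities, so that $\Gamma_k(\lambda)(z_1,z_2)$ eventually satisfies the weighted bound \eqref{eq:tildegamma}. A secondary delicate point is matching the number of $\lambda$-derivatives spent on integration by parts to the $\lceil n/2\rceil+1$ allowed by the hypothesis; this tight balance is what drives the care needed in the resolvent expansions of Section~\ref{sec:resolvents}.
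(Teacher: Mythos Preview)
Your general strategy---substitute explicit resolvent kernels, reduce to an oscillatory $\lambda$-integral, integrate by parts---is the right one, but the execution plan has real gaps and diverges from the paper's in a way that matters.

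First, a structural point. The paper does \emph{not} keep the spherical integral $\int_{S^{n-1}} e^{i\lambda(z_2-y)\cdot\omega}\,d\sigma(\omega)$; instead it invokes a representation (Lemma~\ref{prop:F}) in which the spectral measure is already written as $\lambda^{n-2m}\bigl(e^{i\lambda|z_2-y|}F_+(\lambda|z_2-y|)+e^{-i\lambda|z_2-y|}F_-(\lambda|z_2-y|)\bigr)$ with symbol bounds $|\partial_\lambda^N F_\pm|\lesssim \lambda^{-N}\langle\lambda r\rangle^{(1-n)/2}$. In other words the stationary-phase step on the sphere is carried out \emph{before} any integration by parts in $\lambda$. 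Your plan to integrate by parts in $\lambda$ first and then ``apply stationary-phase decay of the Fourier transform of the surface measure'' is backwards: after IBP you have powers of $[|x-z_1|+(z_2-y)\cdot\omega]^{-1}$ rather than oscillatory exponentials in $\omega$, so stationary phase no longer applies, and the resulting $\omega$-integral has a genuine singularity on the set $\{(z_2-y)\cdot\omega=-|x-z_1|\}$ whenever $|z_2-y|\geq|x-z_1|$. Handling that singularity is delicate and is exactly what the pre-applied stationary phase avoids.

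Second, and more importantly, you have not identified the mechanism that restricts to $p<n/(2m)$. The paper's proof splits $K$ into four pieces according to the relative sizes of $r_1=|x-z_1|$ and $r_2=|z_2-y|$: the regions $r_1,r_2\lesssim 1$, $r_1\approx r_2\gg 1$, and $r_1\gg\langle r_2\rangle$ all yield \emph{admissible} kernels (bounded on all $L^p$, $1\leq p\leq\infty$), while the single region $r_2\gg\langle r_1\rangle$ produces a kernel dominated by $r_1^{2m-n}r_2^{2m-n}$, and it is Lemma~\ref{ lem:admissible3} that shows this is bounded on $L^p$ precisely for $1\leq p<n/(2m)$. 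Your sketch---``Schur test combined with Hardy--Littlewood--Sobolev interpolates to give the full range''---does not explain why the upper endpoint is $n/(2m)$ rather than $\infty$, and interpolation alone will not produce an open range ending at a finite exponent unless you already have a kernel bound that is critical at that exponent.

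Finally, your last paragraph about local singularities of $\mR_0^+$ and the need for large $k$ is not part of the proof of this proposition: here $\Gamma$ is \emph{assumed} to satisfy \eqref{eq:tildegamma}, and the iteration argument you describe belongs to Section~\ref{sec:resolvents}, where one verifies that the concrete $\Gamma_k$ meets this hypothesis.
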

The claim of Theorem~\ref{thm:main low} follows in Section~\ref{sec:resolvents} by showing that $\Gamma_k(\lambda)$ defined in \eqref{Gammalambda} satisfies the hypotheses of this proposition.
To prove the proposition  we need the following representations of the free resolvent, these are Lemma 2.3, 2.4, and Remark 2.5 in \cite{EGWaveOpExt}, which have their roots in \cite{EGWaveOp}.
\begin{lemma}\label{prop:F} Let $n>2m\geq 2$. Then,   we have the representations  
	$$
	\mR_0^+(\lambda^{2m})(y,u)
	=  \frac{e^{i\lambda|y-u|}}{|y-u|^{n-2m}} F(\lambda |y-u| ).$$
	and
	$$
	[\mR_0^+(\lambda^{2m})-\mR_0^-(\lambda^{2m})](y,u)= \lambda^{n-2m}  \big[ e^{i\lambda |y-u|}F_+(\lambda |y-u|)+e^{-i\lambda |y-u|}F_-(  \lambda |y-u|)\big],
	$$
	With the bounds
	\be\label{Fbounds}
	|\partial_\lambda^N F(\lambda r)|\les \lambda^{-N} \la  \lambda r \ra^{\frac{n+1}2 -2m},\, \,\,\, \,\,\, 
	|\partial_\lambda^N F_\pm(\lambda r)|\les \lambda^{-N} \la  \lambda r\ra^{\frac{1-n}2}.
	\ee
	for all $N\geq 0$.
\end{lemma}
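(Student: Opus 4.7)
The plan is to exploit the oscillatory structure of the $\lambda$-integral in \eqref{Kdef} by expanding the free resolvents via Lemma \ref{prop:F} and repeatedly integrating by parts in $\lambda$. Writing $\mR_0^+(\lambda^{2m})(x,z) = e^{i\lambda|x-z|} F(\lambda|x-z|)/|x-z|^{n-2m}$ and applying the representation for $\mR_0^+ - \mR_0^-$, the kernel takes the form
$$K(x,y) = \int_{\R^{2n}} \frac{v(z) v(u)}{|x-z|^{n-2m}} \big[ I_+(x,z,u,y) + I_-(x,z,u,y) \big]\, dz\, du,$$
where each oscillatory factor is
$$I_\pm(x,z,u,y) = \int_0^\infty \chi(\lambda)\, \lambda^{n-2m-1}\, F(\lambda|x-z|)\, F_\pm(\lambda|u-y|)\, e^{i\lambda(|x-z| \pm |u-y|)}\, \Gamma(\lambda)(z,u)\, d\lambda.$$

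The next step is to integrate by parts in $\lambda$ in $I_\pm$ up to $N := \lceil n/2 \rceil + 1$ times. Each integration by parts trades one power of the amplitude for a factor $(|x-z| \pm |u-y|)^{-1}$ supplied by the phase, and all boundary terms vanish since $\chi$ is compactly supported and the prefactor $\lambda^{n-2m-1}$ vanishes at $\lambda=0$ (recall $n>4m$). Derivatives falling on $F$ or $F_\pm$ are controlled by the bounds \eqref{Fbounds}, while derivatives on $\Gamma(\lambda)(z,u)$ are absorbed by the hypothesis $|\lambda^\ell \partial_\lambda^\ell \Gamma(\lambda)(z,u)| \les \widetilde\Gamma(z,u)$, which is precisely tailored to handle $\ell$ up to $N$. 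The outcome is a pointwise bound of the schematic form
$$|I_\pm(x,z,u,y)| \les \widetilde\Gamma(z,u)\, \Phi_\pm(|x-z|,|u-y|),$$
with an explicit weight $\Phi_\pm$ encoding the decay obtained from integration by parts together with the growth of $F$ and $F_\pm$.

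Combining this bound with the assumed decay $\widetilde\Gamma(z,u) \les \la z\ra^{-n/2-}\la u\ra^{-n/2-}$ and the decay of $v$ guaranteed by $\beta>n_\star$, I would then carry out the $z,u$ integrations to produce a pointwise estimate on $|K(x,y)|$. The $L^p$-boundedness for the full range $1\leq p< n/(2m)$ would then follow by establishing the $L^1\to L^1$ bound via a Schur test and an appropriate $L^{p_0}\to L^{p_0}$ estimate at some $p_0$ just below $n/(2m)$ (allowing interpolation to cover the full range). The natural appearance of the threshold $n/(2m)$ reflects the fractional-integral scaling $|x-z|^{-(n-2m)}$ of the free resolvent kernel, which saturates at that exponent.

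The main technical obstacle is the analysis of $I_-$ in the near-diagonal regime $|x-z|\approx|u-y|$, where the phase $\lambda(|x-z|-|u-y|)$ fails to oscillate and integration by parts yields only trivial gain. There one must instead extract size directly from the amplitude, using carefully the $(\lambda|u-y|)^{(1-n)/2}$ decay of $F_-$ from \eqref{Fbounds} together with the support of $\chi$, and balance this with the loss of decay in $|x-z|-|u-y|$. This near-diagonal analysis is also where the even/odd dimensional split $n_\star = n+3$ vs.\ $n+4$ enters: a half-integer of extra smoothness in the amplitude is available in even dimensions but not in odd, and must be compensated by requiring correspondingly more decay from $V$. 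Working out these estimates uniformly in the parameters and assembling them to produce the pointwise $K(x,y)$ bound is the bulk of the technical content and will be addressed in the lemmas of Section~\ref{sec:tech}.
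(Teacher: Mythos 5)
Your proposal does not prove the statement in question. Lemma~\ref{prop:F} is a claim about the \emph{free} resolvent kernels themselves: that $\mR_0^\pm(\lambda^{2m})(y,u)$ factors as an explicit oscillatory exponential times $|y-u|^{2m-n}$ times an amplitude $F$, and that the spectral jump $\mR_0^+-\mR_0^-$ carries the prefactor $\lambda^{n-2m}$ with amplitudes $F_\pm$, all obeying the symbol-type bounds \eqref{Fbounds}. What you have written instead is an outline of the proof of Proposition~\ref{lem:low tail low d}: you begin by ``expanding the free resolvents via Lemma~\ref{prop:F}'' and then integrate by parts in the $\lambda$-integral defining $K(x,y)$ in \eqref{Kdef}. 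That takes the lemma as an input, so as a proof of the lemma it is circular, and no amount of stationary-phase analysis of $K$ can produce the existence of $F$, $F_\pm$ or the derivative bounds \eqref{Fbounds}, which are properties of the kernel of $((-\Delta)^m-\lambda^{2m}\mp i0)^{-1}$ alone. (Your discussion of where $p<\frac{n}{2m}$ and the even/odd split of $n_\star$ enter also belongs to the proposition, not to this lemma, whose hypotheses only require $n>2m$.)

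The missing content is the derivation of the representations. The route (followed in \cite{EGWaveOp,EGWaveOpExt}, where this lemma is quoted from Lemmas 2.3, 2.4 and Remark 2.5 of \cite{EGWaveOpExt} rather than reproved) is the splitting identity
\begin{equation*}
\mR_0(z)=\frac{1}{mz^{1-\frac1m}}\sum_{\ell=0}^{m-1}\omega_\ell R_0\big(\omega_\ell z^{\frac1m}\big),\qquad \omega_\ell=e^{2\pi i\ell/m},
\end{equation*}
which reduces everything to second-order resolvents $R_0(\zeta)=(-\Delta-\zeta)^{-1}$, whose kernels are explicit Hankel/Bessel functions of $\zeta^{1/2}|y-u|$ times $|y-u|^{-(n-2)/2}$. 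The bound $|\partial_\lambda^NF(\lambda r)|\les\lambda^{-N}\la\lambda r\ra^{\frac{n+1}{2}-2m}$ then follows from the small- and large-argument asymptotics of $H^{(1)}_{\frac{n-2}{2}}$ after dividing by the normalization $\lambda^{2m-2}$ and by $|y-u|^{2m-n}$; the terms with $\ell\neq 0$ have exponentially decaying kernels and are harmless. For the jump $\mR_0^+-\mR_0^-$, only the $\ell=0$ branch is discontinuous across the positive axis, and its jump is a constant times $\lambda^{\frac{n-2}{2}}|y-u|^{-\frac{n-2}{2}}J_{\frac{n-2}{2}}(\lambda|y-u|)$; since $J_\nu(s)\sim s^{\nu}$ for $s\les 1$ and $J_\nu(s)\sim s^{-1/2}e^{\pm is}$ for $s\gg1$, one reads off the $\lambda^{n-2m}$ prefactor and $|F_\pm|\les\la\lambda r\ra^{\frac{1-n}{2}}$, and the derivative bounds follow from the corresponding bounds for derivatives of Bessel functions. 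You need to either carry out this computation or cite it; the argument you gave cannot substitute for it.
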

We say an operator $K$ with integral kernel $K(x,y)$ is admissible if
$$
\sup_{x\in \R^n} \int_{\R^n} |K(x,y)|\, dy+	\sup_{y\in \R^n} \int_{\R^n} |K(x,y)|\, dx<\infty.
$$
By the Schur test, it follows that an operator with admissible kernel is bounded on $L^p(\R^n)$ for all $1\leq p\leq \infty$. We are now ready to prove Proposition~\ref{lem:low tail low d}.

\begin{proof}[Proof of Proposition~\ref{lem:low tail low d}]
	
	Using the representations in Lemma~\ref{prop:F} with $r_1=|x-z_1|$ and $r_2:=|z_2-y|$ we see that $K(x,y)$ is the difference of  
	\begin{multline} \label{Kdefini}
		K_\pm(x,y)\\=\int_{\R^{2n}}  \frac{ v(z_1) v(z_2) }{r_1^{n-2m}   } \int_0^\infty e^{i\lambda (r_1 \pm r_2 )} \chi(\lambda) \lambda^{n-2m-1} \Gamma(\lambda)(z_1,z_2) F(\lambda r_1)F_\pm(\lambda r_2)  d\lambda dz_1 dz_2 .
	\end{multline}
	We write
	$$
	K(x,y) =: \sum_{j=1}^4 K_{j}(x,y),
	$$
	where the $K_j$ are $K$ restricted to different regions.  
	$K_1$ is the portion of $K$ restricted  to the set $r_1,r_2\les 1$, $K_2$ to the set $r_1 \approx r_2\gg 1 $,
	$K_3$ to the set $r_2\gg \la r_1\ra $, and $K_4$ to the set $r_1 \gg \la r_2\ra$. We define $K_{j,\pm}$ analogously. 
	
	Since both $\lambda$ and $r_j\les 1$ we have $\lambda r_j\les 1$. Using the bounds of Lemma~\ref{prop:F}  we bound the contribution of 	$|K_{1,\pm}(x,y)|$ by 
	\begin{multline*}
		\int_{\R^{2n}}  \frac{ v(z_1) v(z_2)\chi_{r_1,  r_2\les 1} }{r_1^{n-2m}   }   \widetilde\Gamma (z_1,z_2) \int^1_0\lambda^{n-2m-1}\ \!d\lambda dz_1 dz_2\\
		\lesssim\int_{\R^{2n}}  \frac{ v(z_1) v(z_2)\chi_{r_1,  r_2\les 1} }{r_1^{n-2m}   }   \widetilde\Gamma (z_1,z_2)dz_1 dz_2,
	\end{multline*}
	since $n-2m-1>-1$. 
	Using the pointwise decay  of $v$ and $\widetilde \Gamma$ in \eqref{eq:tildegamma}, we obtain
	$$\int  |K_{1,\pm}(x,y)|dy\les \int \la z_1\ra^{-n-}\la z_2 \ra^{-n-} r_1^{2m-n} dz_1dz_2 \les 1,  
	$$
	uniformly in $x$.  By the symmetry between $x$ and $y$
	which implies that $K_1$ is admissible.
	
	For $K_2$, we consider the contribution of $K_{2,-}$ since the $K_{2,+}$ is simpler. We integrate by parts twice in the $\lambda$ integral when $\lambda |r_1-r_2|\gtrsim1$ (using \eqref{Fbounds} and the definition of $\widetilde \Gamma$) and estimate directly when $\lambda |r_1-r_2|\ll1$ to obtain 
	\begin{multline*} 
		|K_{2,-}(x,y)|\les\\
		\int_{\R^{2n}} \frac{v(z_1)  \widetilde\Gamma (z_1,z_2) v(z_2)\chi_{r_1\approx r_2\gg 1 }}{  r_1^{n-2m}  } \int_0^\infty  \chi(\lambda) \lambda^{n-2m-1} \chi(\lambda|r_1-r_2|)  \la\lambda r_1\ra^{1-2m}    d\lambda dz_1 dz_2 \\
		+ \int_{\R^{2n}} \frac{v(z_1)  \widetilde\Gamma (z_1,z_2) v(z_2)\chi_{r_1\approx r_2\gg 1 }}{  r_1^{n-2m}  } \int_0^\infty  \frac {\chi(\lambda) \lambda^{n-2m-3} \widetilde\chi(\lambda|r_1-r_2|)   \la\lambda r_1\ra^{1-2m}}{|r_1-r_2|^2}     d\lambda dz_1 dz_2\\
		\les  \int_{\R^{2n}} \frac{v(z_1)  \widetilde\Gamma (z_1,z_2) v(z_2)\chi_{r_1\approx r_2\gg 1 }}{  r_1^{n-2m}  } \int_0^\infty  \frac {\chi(\lambda) \lambda^{n-2m-1}   \la\lambda r_1\ra^{1-2m}}{\la \lambda (r_1-r_2)\ra^2}     d\lambda dz_1 dz_2. 
	\end{multline*}
	Noting that $1-2m<0$, we can integrate this bound with respect to $x$ after converting to polar coordinates centered around $z_1$ to bound by
	\begin{multline*}
		\int|K_{2,-}(x,y)| dx \les\\
		\int_{\R^{2n}} \int_0^1 \int_{r_1\approx r_2\gg 1}  v(z_1)  \widetilde\Gamma (z_1,z_2) v(z_2)    r_1^{n-1}\frac {\lambda^{n-2m-1}(\lambda r_1)^{1-2m}}{r_1^{n-2m}\la \lambda (r_1-r_2)\ra^2}      d r_1 d\lambda dz_1 dz_2\\
		\les\int_{\R^{2n}} \int_0^1 \int_{r_1\approx r_2\gg 1}  v(z_1)  \widetilde\Gamma (z_1,z_2) v(z_2)    \frac {  \lambda^{n-4m}   }{\la \lambda (r_1-r_2)\ra^2}      d r_1 d\lambda dz_1 dz_2\\
		\les\int_{\R^{2n}} \int_0^1 \int_{\R}  v(z_1)  \widetilde\Gamma (z_1,z_2) v(z_2)    \frac {  \lambda^{n-4m-1}   }{\la  \eta \ra^2}      d \eta  d\lambda dz_1 dz_2\les 1,
	\end{multline*}
	uniformly in $y$. In the second line we defined $\eta=\lambda (r_1-r_2)$ in the $r_1$ integral and used   $n-4m-1\geq 0$.
	Since $r_1\approx r_2$, the integral in $y$ can be bounded uniformly in $x$ similarly and hence the contribution of $K_2$ is admissible.
	
	We now consider the contribution of
	\begin{multline} \label{K4ndef}
		K_{4,\pm}(x,y)= 
		\int_{\R^{2n}}  \frac{v(z_1) v(z_2)\chi_{r_1 \gg \la r_2\ra}}{r_1^{n-2m}}  \\  \int_0^\infty  e^{i  \lambda (r_1\pm r_2) } F(  \lambda r_1) \chi(\lambda)   \Gamma(\lambda)(z_1,z_2) \lambda^{n-2m-1}  F_\pm(\lambda r_2)  \ d\lambda dz_1 dz_2 .
	\end{multline}
	When $\lambda r_1\les 1$, using \eqref{Fbounds}, we bound $|F_\pm(\lambda r_2)|, |F(\lambda r_1)|\les 1$ and estimate the $\lambda$ integral  by 
	$r_1^{2m-n} \widetilde \Gamma(z_1,z_2)$, whose contribution to  $K_4$ is bounded by 
	$$
	\int_{\R^{2n}}  \frac{v(z_1) v(z_2) \widetilde \Gamma(z_1,z_2) \chi_{r_1 \gg \la r_2\ra}}{r_1^{n-2m+(n-2m)}} dz_1 dz_2 .
	$$
	Where apply Lemma~\ref{lem:admissible} with $\ell=n-4m$ and $k=n-2m$ to show that that $K_{4,\pm}$ are admissible kernels.
	
	When $\lambda r_1 \gtrsim 1$, we integrate by parts $N=\lceil\frac{n}{2}\rceil+1$ times 
	(using \eqref{Fbounds}) to obtain the bound
	\begin{align*}
		&\frac1{|r_1\pm r_2|^{N}}\int_0^\infty \Big|\partial_\lambda^{N} 
		\big[F(  \lambda r_1) \widetilde\chi(\lambda r_1)  \chi(\lambda)  \lambda^{n-2m-1} \Gamma(\lambda)(z_1,z_2)   F_\pm(\lambda r_2) \big] \Big| d\lambda\\ 	
		&\les r_1^{-N} \sum_{0\leq j_1+j_2+j_3+j_4\leq N, \, j_i\geq 0  }  
		\int_{\frac1{r_1}}^1    \lambda^{\frac{n+1}2-2m-j_1}  r_1^{\frac{n+1}2-2m}   \lambda^{n-2m-1-j_2}  \big|\partial_\lambda^{j_3}\Gamma(\lambda)(z_1,z_2)  \big| \frac{\lambda^{-j_4}}{\la \lambda r_2\ra^{\frac{n-1}2}} d\lambda\\
		&\les r_1^{ \frac{n+1}2-2m-N }  \widetilde \Gamma(z_1,z_2) \sum_{0\leq j_1+j_2+j_3+j_4\leq N, \, j_i\geq 0  }  
		\int_{\frac1{r_1}}^1    \lambda^{\frac{3n-1}2-4m-j_1-j_2-j_3-j_4}\gen{\lambda r_2}^{\frac{1-n}{2}} d\lambda\\
		&\les r_1^{ \frac{n+1}2-2m-N }  \widetilde \Gamma(z_1,z_2) \int_{\frac1{r_1}}^1\frac{\lambda^{\frac{3n-1}2-4m -N}}{\gen{\lambda r_2}^{\frac{n-1}{2}}}  d\lambda
	\end{align*}
	Since $\frac{n-1}{2}>0$ the contribution of $\gen{\lambda r_2}$ can be ignored.  Since $n> 4m$, $n-4m\geq 1$ and
	\[\frac{3n-1}{2}-4m-N\geq 1+\frac{n}{2}-\frac{1}{2}-\left\lceil\frac{n}{2}\right\rceil-1\geq-1.\]
	This gives us that the contribution from the $\lambda$ integral is bounded by $\log(r_1)\lesssim r_1^{\frac{1}{4}}$.  This gives us the total contribution to $K_4$ of
	\[\int_{\mathbb{R}^{2n}}\frac{v(z_1)v(z_2)\widetilde{\Gamma}(z_1,z_2)}{r_1^{\frac{n}{2}-\frac{3}{4}+N}} \,dz_1\, dz_2.\]
	Noting that $\lceil\frac{n}{2}\rceil+\frac{n}{2}+1-\frac{3}{4}\geq n+\frac{1}{4}$, again using that $r_1\gg r_2$, the contribution of this to \eqref{K4ndef} is bounded by
	$$
	\int_{\R^{2n}}  \frac{v(z_1) \widetilde{\Gamma}(z_1,z_2) v(z_2)\chi_{r_1 \gg \la r_2\ra}}{r_1^{n+\frac{1}{4}}} dz_1 dz_2,
	$$
	which is admissible  by Lemma~\ref{lem:admissible}.

	We now consider $K_3$ which restricts the upper portion of the range of $p$ to $1\leq p<\frac{n}{2m}$. Using Lemma~\ref{prop:F} we write
	\begin{multline}\label{K3}
		K_3(x,y):=
		\int_{\R^{2n}}  \frac{v(z_1) v(z_2)\chi_{r_2 \gg \la r_1\ra}}{r_1^{n-2m}} \,dz_1\, dz_2 \\
		\int_0^\infty  e^{i  \lambda (r_1\pm r_2) } F(  \lambda r_1) \chi(\lambda)  \lambda^{n-2m-1}   \Gamma(\lambda)(z_1,z_2)  F_\pm(\lambda r_2)  \ d\lambda  .
	\end{multline}
	When $\lambda r_2\lesssim 1$, we can apply \eqref{Fbounds} to obtain the bound
	\begin{align*}
		\int_{\R^{2n}}  \frac{v(z_1) v(z_2)\chi_{r_2 \gg \la r_1\ra}}{r_1^{n-2m}}&\widetilde{\Gamma}(z_1,z_2)\int^{r_2^{-1}}_0\gen{\lambda r_1}^{\frac{n+1}{2}-2m}\lambda^{n-2m-1}\gen{\lambda r_2}^{\frac{1-n}{2}} \ d\lambda dz_1 dz_2\\
		\lesssim&\int_{\R^{2n}}  \frac{v(z_1) v(z_2)\chi_{r_2 \gg \la r_1\ra}}{r_1^{n-2m}}\widetilde{\Gamma}(z_1,z_2)\int^{r_2^{-1}}_0\lambda^{n-2m-1}\ d\lambda dz_1 dz_2\\
		\lesssim&\int_{\R^{2n}}  \frac{v(z_1) v(z_2)\chi_{r_2 \gg \la r_1\ra}}{r_1^{n-2m}r_2^{n-2m}}\widetilde{\Gamma}(z_1,z_2) dz_1dz_2,
	\end{align*}
	which is bounded when $1\leq p<\frac{n}{2m}$ by Lemma~\ref{ lem:admissible3} with $k=\ell=0$.
	
	When $\lambda r_2 \gtrsim 1$, we integrate by parts $N=\lceil\frac{n}{2}\rceil +1$ times 
	(using \eqref{Fbounds}) to obtain the bound
	\begin{align}
		&\frac1{|r_1\pm r_2|^{N}}\int_0^\infty \Big|\partial_\lambda^{N} 
		\big[F(  \lambda r_1) \widetilde\chi(\lambda r_2)  \chi(\lambda)  \lambda^{n-2m-1} \Gamma(\lambda)(z_1,z_2)   F_\pm(\lambda r_2) \big] \Big| d\lambda\nonumber\\ 	
		&\les r_2^{-N} \sum_{0\leq j_1+j_2+j_3+j_4\leq N, \, j_i\geq 0  }  
		\int_{\frac1{r_2}}^1    \lambda^{-j_1}\gen{\lambda r_1}^{\frac{n+1}2-2m}\lambda^{n-2m-1-j_2}  \big|\partial_\lambda^{j_3}\Gamma(\lambda)(z_1,z_2)  \big|\frac{\lambda^{\frac{1-n}{2}-j_4}}{r_2^{\frac{n-1}2}} d\lambda\nonumber\\
		&\les r_2^{ \frac{1-n}2-N }  \widetilde \Gamma(z_1,z_2) \sum_{0\leq j_1+j_2+j_3+j_4\leq N, \, j_i\geq 0  }  
		\int_{\frac1{r_2}}^1    \lambda^{\frac{n-1}2-2m-j_1-j_2-j_3-j_4}\gen{\lambda r_1}^{\frac{n+1}{2}-2m} d\lambda\nonumber\\
		\label{K3MidStep}&\les r_2^{ \frac{1-n}2-N }    \widetilde \Gamma(z_1,z_2) \int_{\frac1{r_2}}^1\lambda^{\frac{n-1}2-2m -N}\gen{\lambda r_1}^{\frac{n+1}{2}-2m}d\lambda.\nn 
	\end{align}
	Here we note that derivatives of $\widetilde \chi(\lambda r_2)$ are comparable to division by $\lambda$.
	We consider the cases when $\lambda r_1\gtrsim 1$ and $\lambda r_1\lesssim 1$ as follows:
	\begin{multline*}
		r_2^{ \frac{1-n}2-N }    \widetilde \Gamma(z_1,z_2) \int_{\frac1{r_2}}^1\lambda^{\frac{n-1}2-2m -N}\gen{\lambda r_1}^{\frac{n+1}{2}-2m}d\lambda\\
		\les r_2^{-\frac{n-1}2-N} \widetilde\Gamma(z_1,z_2) 
		\Big(\int_{\frac1{r_2}}^{\min(\frac1{r_1},1)}  \lambda^{\frac{n }2-2m-N-\frac{1}{2}}\chi(\lambda r_1)d\lambda \\
		+  \int_{\min(\frac1{r_1},1)}^1 r_1^{\frac{n+1}2-2m}  \lambda^{n-4m-N}\widetilde\chi(\lambda r_1)d\lambda  \Big).
	\end{multline*}
	Since $\frac{n}{2}-2m-N-\frac{1}{2}=\frac{n}{2}-\lceil\frac{n}{2}\rceil-2m-\frac{3}{2}\leq-\frac{3}{2}$, the first integral is at most $r_2^{2m-\frac{n}{2}+N-\frac{1}{2}}$, so its contribution to \eqref{K3} is at most
	\be\label{eqn:K3Mid1}
	\int_{\R^{2n}}  \frac{v(z_1) v(z_2)\chi_{r_2 \gg \la r_1\ra}}{r_1^{n-2m}r_2^{n-2m}}\widetilde \Gamma(z_1,z_2)    dz_1 dz_2,
	\ee
	which is bounded for $1\leq p<\frac{n}{2m}$ by Lemma~\ref{ lem:admissible3}.  
	
	Similarly, after multiplying the second integral by $(\lambda r_1)^{N-1}\gtrsim 1$, 
	\[\int_{\min(\frac1{r_1},1)}^1 r_1^{\frac{n+1}2-2m}  \lambda^{n-4m-N}\widetilde\chi(\lambda r_1)d\lambda\lesssim\int_{\min(\frac1{r_1},1)}^1 r_1^{\frac{n+1}2-2m+N-1}  \lambda^{n-4m-1}d\lambda,\]
	since $n-4m-1\geq 0$, the $\lambda$ integral is bounded and the contribution is bounded by $r_1^{\frac{n+1}{2}-2m+N-1}$.  Letting $\{n/2\}=\lceil n/2\rceil-n/2$, the second integral's contribution to \eqref{K3} is at most
	\begin{multline*}
		\int_{\R^{2n}}  \frac{v(z_1) v(z_2)\chi_{r_2 \gg \la r_1\ra}r_1^{N-\frac{n}{2}-\frac{1}{2}}}{r_2^{N+\frac{n}{2}-\frac{1}{2}}}\widetilde \Gamma(z_1,z_2)    dz_1 dz_2\\
		=\int_{\R^{2n}}  \frac{v(z_1) v(z_2)\chi_{r_2 \gg \la r_1\ra}r_1^{\{n/2\}+\frac{1}{2}}}{r_2^{n+\{n/2\}+\frac{1}{2}}}\widetilde \Gamma(z_1,z_2)    dz_1 dz_2,
	\end{multline*}
	which is bounded in $L^p$ for all $1\leq p\leq \infty$ by Lemma~\ref{lem:admissible2}. 
\end{proof}

\section{Resolvent and Inverse Expansions}\label{sec:resolvents}
It remains only to prove that the operators $\Gamma_k(\lambda)$ defined in \eqref{Gammalambda} satisfy the bounds \eqref{eq:tildegamma} needed to apply Proposition~\ref{lem:low tail low d}.    In this section we develop different resolvent expansions and develop an expansion of $M^+(\lambda)^{-1}$ for small $\lambda$ when there is a threshold eigenvalue.  Throughout this section we consider the `+' limiting operators and omit the superscript.

Recall that $n_{\star}=n+4$ if $n$ is odd and $n_\star=n+3$ if $n$ is even. The bounds in Lemma~\ref{prop:F} imply that the operator $R_\ell$ with kernel
\be\label{R_k}
R_\ell(x,y):=v(x)v(y)\sup_{0<\lambda<\lambda_0}|\lambda^{\ell} \partial_\lambda^\ell \mR_0^+(\lambda^{2m})(x,y)|
\ee
satisfies 
$$
R_\ell(x,y) \les v(x) v(y) 
\big(|x-y|^{2m-n}+|x-y|^{\ell-(\frac{n-1}{2})}\big),\,\,\,\ell\geq 0.$$
This pointwise bound implies that  $R_\ell$ is bounded on $L^2(\R^n)$ for $0\leq \ell\leq \lceil \frac{n}2\rceil+1$ provided that $|V(x)|\les \la x \ra^{-\beta}$ for some $\beta>n_{\star}$, see \cite{EGWaveOp,EGWaveOpExt}. 

We write the iterated resolvent operators 
\begin{align}\label{eq:Alambda}
	A(\lambda, z_1,z_2) =  \big[ \big(\mR_0^+(\lambda^{2m})  V\big)^{k-1}\mR_0^+(\lambda^{2m})\big](z_1,z_2).
\end{align}  
For odd dimensions $n>4m$, if   $ k $ is sufficiently large depending on $n,m$ and $|V(x)|\les \la x \ra^{-n_\star-}$, then 
\begin{align*}
	\sup_{0<\lambda <1}| \lambda^{\ell} \partial_\lambda^\ell 	A(\lambda, z_1,z_2)|&\les \la z_1 \ra^2  \la z_2\ra^2,
\end{align*}
for $0\leq \ell\leq \frac{n+3}{2}=\lceil \frac{n}2\rceil+1$. This follows from the pointwise bounds on $R_\ell$ above. The iteration of the resolvents smooths out the local singularity $|x-\cdot|^{2m-n}$.  Each iteration improves the local singularity by $2m$, so that after $\ell$ iterations the local singularity is of size $|x-\cdot|^{2m\ell-n}$.  Selecting $k$ large enough ensures that the local singularity is completely integrated away. See the proofs of Propositions~5.3 and 6.5 in \cite{EGWaveOp} for more details. For even $n> 4m$, since we need fewer derivatives we have
$$	\sup_{0<\lambda <1}|\lambda^{\ell} \partial_\lambda^\ell 	A(\lambda, z_1,z_2)| \les \la z_1 \ra^{\frac32}  \la z_2\ra^{\frac32},
$$
for $0\leq \ell\leq \frac{n+2}{2}=\lceil \frac{n}2\rceil+1$.  More compactly, we have that
$$
\sup_{0<\lambda<1} |\lambda^\ell \partial_\lambda^\ell A(\lambda, z_1,z_2)|\les \la z_1\ra^{\{\frac{n}2\}+\frac32} \la z_2\ra^{\{\frac{n}2\}+\frac32}, \qquad 0\leq \ell \leq \lceil \frac{n}{2}\rceil +1.
$$

Finally, recalling that  
$$
\Gamma_k(\lambda)=Uv A(\lambda)v\lambda^{2m}M^{-1}(\lambda)v A(\lambda)vU,
$$
our goal is to show that $\sup_{0<\lambda<\lambda_0}|\lambda^{\ell+2m}\partial_\lambda^\ell[M(\lambda)]^{-1}(x,y)|$ is bounded on $L^2$ for each $0\leq\ell\leq\lceil\frac{n}{2}\rceil+1$.

We then define the following operators or their integral kernels
\begin{align*}
	G_0(x,y):=&a_0|x-y|^{2m-n}=\mR_0(0)(x,y),\\
	T_0:=&U+vG_0v.
\end{align*}
Here $a_0\neq 0$ is a real constant depending on $m$ and $n$.  Recall that the invertibility of $T_0$ on $L^2(\R^n)$ is equivalent to the absence of a threshold eigenvalue when $n>4m$, see \cite{soffernew}.
We further define $S_1$ to be the Riesz projection onto the kernel of $T_0$ so that the operator $T_0+S_1$ is invertible, and we define
\begin{align*}
	D_0:=&(T_0+S_1)^{-1}.
\end{align*}
From Proposition~2.4 in \cite{soffernew}, we have
\be\label{eqn:R_0 expansion}
\mR_0(\lambda^{2m})(x,y)=\sum^{N-1}_{j=0}\lambda^{2mj}G_j(x,y)+c_{n,m}\lambda^{n-2m} +E_0(\lambda)(x,y)
\ee
with $N=\lfloor\frac{n}{2m}\rfloor$. Here
$$
G_j(x,y)=c_{j,n,m}|x-y|^{2m-n+2mj}, j=0,...,N-1. 
$$
The exact value of these constants is unimportant for our purposes.  
We note that these expansions follow from those of the Schr\"odinger resolvents $R_0(z)=(-\Delta-z)^{-1}$ and the splitting identity
$$
\mR_0(z)(x,y):=((-\Delta)^m -z)^{-1}(x,y)=\frac{1}{ mz^{1-\frac1m} }
\sum_{\ell=0}^{m-1} \omega_\ell R_0 ( \omega_\ell z^{\frac1m})(x,y),
$$
where $\omega_\ell=\exp(i2\pi \ell/m)$ are the $m^{th}$ roots of unity.  The expansions utilize a significant amount of cancellation obtained from the splitting identity and the sum over the roots of unity.  See also the proofs of Lemma~4.2 and 6.2 in \cite{EGWaveOp}.  

We note that the error bounds for $E_0(\lambda)$ in \cite{soffernew} don't suffice for our purposes, instead we develop shorter expansions with more detailed control of the error term.
We make this more precise with the following lemmas.

\begin{lemma}\label{lem:R0 for Mexp}
	
	When $0<\lambda<\lambda_0$ and $0\leq \ell \leq \lceil\frac{n}{2}\rceil+1$, we have
	$$
	\mR_0(\lambda)=G_0+\lambda^{2m}G_1+E(\lambda)
	$$
	where for any $0<\epsilon<1$ we have
	$$
	\lambda^{\ell}|\partial_\lambda^{\ell} E(\lambda)(x,y)|\les \lambda^{2m+\epsilon}[|x-y|^{\{\frac{n}2\}+\frac32}+|x-y|^{4m-n+\epsilon}],
	$$
	where $\{n/2\}=\lceil n/2\rceil-n/2$.
	
\end{lemma}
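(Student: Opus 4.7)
The plan is to write $E(\lambda)(x,y) = \mR_0(\lambda^{2m})(x,y) - G_0(x,y) - \lambda^{2m}G_1(x,y)$ and bound $\lambda^\ell|\partial_\lambda^\ell E(\lambda)(x,y)|$ pointwise, splitting the analysis at $\lambda r = 1$ where $r = |x-y|$. The two terms in the claimed bound correspond to the two regimes: the singular piece $|x-y|^{4m-n+\epsilon}$ (recall $4m-n<0$) is relevant when $\lambda r\lesssim 1$ and one must exploit the cancellation between the three pieces of $E$, while the growth piece $|x-y|^{\{n/2\}+3/2}$ is relevant when $\lambda r\gtrsim 1$, where $r\gtrsim \lambda^{-1}$ is already large and the three pieces of $E$ can be bounded separately.

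For the regime $\lambda r\gtrsim 1$, I would use the oscillatory bounds in \eqref{Fbounds}. Writing $\mR_0(\lambda^{2m})(x,y) = r^{2m-n} e^{i\lambda r}F(\lambda r)$, every $\lambda$-derivative either hits the exponential (producing a factor of $r$) or hits $F$ (which by \eqref{Fbounds} can contribute $\lambda^{-1}\lesssim r$). Combining with $|F(\lambda r)|\lesssim \la\lambda r\ra^{(n+1)/2-2m}$ gives $\lambda^\ell|\partial_\lambda^\ell\mR_0(\lambda^{2m})(x,y)|\lesssim \lambda^{\ell+(n+1)/2-2m}r^{\ell-(n-1)/2}$. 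Converting any nonpositive $r$-power into a positive $\lambda$-power via $r\gtrsim \lambda^{-1}$, a direct exponent count shows the excess $\lambda$-exponent over the target $\lambda^{2m+\epsilon}$ is $n+2-4m-\epsilon$ for odd $n$ and $n+3/2-4m-\epsilon$ for even $n$, both of which are strictly positive for $n>4m$ and $\epsilon$ small. The contributions of $G_0$ and $\lambda^{2m}G_1$ are absorbed into $\lambda^{2m+\epsilon}|x-y|^{\{n/2\}+3/2}$ by identical bookkeeping, using only $r\gtrsim \lambda^{-1}$ and $n>4m$.

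For the regime $\lambda r\lesssim 1$, I would use the expansion \eqref{eqn:R_0 expansion} to split
\[
E(\lambda) \;=\; \sum_{j=2}^{N-1}\lambda^{2mj}G_j \;+\; c_{n,m}\lambda^{n-2m} \;+\; E_0(\lambda).
\]
Each of the explicit terms is handled by elementary factoring: for $j\geq 2$,
$\lambda^\ell\partial_\lambda^\ell(\lambda^{2mj}|x-y|^{2m-n+2mj})\lesssim \lambda^{2mj}|x-y|^{2m-n+2mj}$ rewrites as $\lambda^{2m+\epsilon}|x-y|^{4m-n+\epsilon}(\lambda|x-y|)^{2m(j-1)-\epsilon}$ with the last factor $\lesssim 1$ since $j\geq 2$ and $\epsilon<2m$; the non-analytic piece rewrites as $\lambda^{n-2m} = \lambda^{2m+\epsilon}|x-y|^{4m-n+\epsilon}(\lambda|x-y|)^{n-4m-\epsilon}$, again $\lesssim 1$ since $n>4m$. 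The residual $E_0$ requires more care because the error bound in \cite{soffernew} is not sharp enough. I would rederive it directly from the representation $\mR_0(\lambda^{2m})(x,y) = r^{2m-n}h(\lambda r)$ of Lemma~\ref{prop:F} with $h(s) = e^{is}F(s)$: under $\partial_\lambda = r\partial_s$, a Taylor-with-remainder argument on the smooth part of $h$ (after isolating the non-analytic $s^{n-2m}$ contribution that arises from the underlying Hankel-function expansions, with a logarithmic correction in even dimensions) yields a remainder of order $s^{2m+\epsilon-\ell}$ in the regime $s\lesssim 1$, which after multiplication by $\lambda^\ell r^{2m-n+\ell} = s^\ell r^{2m-n}$ produces the desired $\lambda^{2m+\epsilon}|x-y|^{4m-n+\epsilon}$ bound.

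The hard part is producing the sharp Taylor remainder for $h(s)$, in particular correctly peeling off the non-analytic $s^{n-2m}$ contribution (and its even-dimensional logarithmic counterpart) so that the remaining smooth part of $h$ has Taylor polynomial $c_0+c_1 s^{2m}$ matching $G_0 + \lambda^{2m}G_1$ after multiplication by $r^{2m-n}$. The $\epsilon$-losses in both terms of the claimed bound are standard interpolation devices: one uses $(\lambda r)^\alpha\lesssim 1$ with a small $\alpha>0$ to shift a sliver of smallness between the $\lambda$- and $r$-factors so that the final estimate factors cleanly.
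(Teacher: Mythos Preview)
Your proposal is correct and follows essentially the same approach as the paper: split at $\lambda|x-y|\approx 1$, use the oscillatory bound \eqref{Fbounds} (together with the explicit forms of $G_0,G_1$) for $\lambda|x-y|\gtrsim 1$, and use the small-argument expansion of the resolvent for $\lambda|x-y|\ll 1$. The only bookkeeping difference is that, rather than passing through the truncated expansion \eqref{eqn:R_0 expansion} and then rederiving a sharp bound on the residual $E_0$, the paper quotes the full convergent series for $\mR_0(\lambda^{2m})$ when $\lambda|x-y|\ll 1$ (from \cite{soffernew} and \cite{EGWaveOp}, treated separately for odd and even $n$ to handle the logarithms) and reads off the bound on $E$ directly by truncating at the $j=2$ term.
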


\begin{proof}
	We need to consider cases as the resolvents behave differently in even and odd dimensions and based on the size of $\lambda|x-y|$.
	When $n$ is odd and $\lambda|x-y|\ll 1$, we have from  Lemma~2.3, equation (2.2), and Remark 2.2 in \cite{soffernew} that
	\begin{multline*}
		\mR_0(\lambda^{2m})(x,y)=\sum^{N-1}_{j=0}c_{j,n,m}\lambda^{2mj} |x-y|^{2m-n+2mj}
		+c_{n,m}\lambda^{n-2m} |x-y|^0\\
		+\sum_{j=0}^\infty c_{j,n,m}\lambda^{2mN+j}|x-y|^{2m-n+2mN+j}
	\end{multline*}
	with $N=\lfloor\frac{n}{2m}\rfloor$. Truncating after the first two terms of the series yields the expansion with 
	$$
	E(\lambda)= \sum^\infty_{k=1}c_k\lambda^{2m+k}|x-y|^{4m-n+k}, \,\,\,\lambda|x-y|\ll 1. 
	$$
	Note that many $c_k$ may be zero, this doesn't affect our bounds.
	This implies that for any $\ell$ we have
	$$
	\lambda^{\ell }\big|\partial^\ell_\lambda E(\lambda)(x,y) \chi(\lambda|x-y|)\big|\les  \lambda^{ 2m+1}|x-y|^{4m-n+1}, 
	$$
	which suffices for small $\lambda |x-y|$.
	
	When $n$ is even, there are logarithms when $\lambda|x-y|\ll 1$ that we must account for. Using Lemma~2.1 and Lemma~2.3 in \cite{soffernew}, (see also the expansion after Lemma 6.2 in \cite{EGWaveOp}) we have the following representation
	\begin{align*}
		\mR_0(\lambda^{2m})=&\sum^{\lceil\frac{n}{2m}\rceil-1}_{k=0}a_k\lambda^{2mk}|x-y|^{2m(k+1)-n}
		+\sum^\infty_{j=0}b_j\lambda^{n-2m+2j}|x-y|^{2j}\\
		&+\sum^\infty_{k=\lceil\frac{n}{2m}\rceil}c_k\lambda^{2m(k-1)}|x-y|^{2mk-n}\Bigl(\ln(|x-y|)+\ln\lambda \Bigr).
	\end{align*}
	However, since the first logarithm appears with a power of $\lambda$ at least as large as $n-2m$ since $\lceil\frac{n}{2m}\rceil \geq 3$, the logarithms don't affect the required bound and
	\begin{align*}
		E(\lambda)&=\sum^{\lceil\frac{n}{2m}\rceil-1}_{k=2}a_k\lambda^{2mk}|x-y|^{2m(k+1)-n}
		+\sum^\infty_{j=0}b_j\lambda^{n-2m+2j}|x-y|^{2j}\\
		&+\sum^\infty_{k=\lceil\frac{n}{2m}\rceil}c_k\lambda^{2m(k-1)}|x-y|^{2mk-n}\Bigl(\ln(|x-y|)+\ln\lambda \Bigr).
	\end{align*}
	The first two sums with only powers of $\lambda|x-y|$ are controlled as in the odd $n$ argument.  For the logarithms, since $k-1\geq 2$, the largest possible contribution is of the form
	$$
	\lambda^{4m} |x-y|^{6m-n}\ln(\lambda|x-y|)\les \lambda^{4m-1}|x-y|^{6m-n-1},
	$$
	using $\log(z)\les z^{-1}$ when $z\les 1$.  We may further divide by powers of $(\lambda |x-y|)$ to match the polynomial bound as needed. Since $m\geq 1$, we have
	$$
	\lambda^{\ell }\big|\partial^\ell_\lambda E(\lambda)(x,y) \chi(\lambda|x-y|)\big|\les  \lambda^{ 2m+1}|x-y|^{4m-n+1},
	$$
	which suffices for small $\lambda |x-y|$.
	
	The bound on the error term when $\lambda|x-y|\gtrsim 1$ follows from the bounds in Lemma~2.3 of \cite{EGWaveOpExt} and the definition of  the kernels of $G_0,G_1$. We have
	\begin{align*}
		\lambda^{\ell}|\partial_\lambda^\ell E(\lambda)|&=\lambda^{\ell}\partial_\lambda^\ell\big[\mR_0(\lambda^{2m})-G_0(x,y)-\lambda^{2m}G_1(x,y)\big]\\
		&\les   \lambda^{\frac{n+1}{2}-2m+\ell} |x-y|^{\frac{1-n}{2}+\ell}+|x-y|^{2m-n}+\lambda^{2m}|x-y|^{4m-n}  .
	\end{align*}
	We note that $\frac{1-n}{2}+\ell\leq \frac{1-n}{2}+\lceil \frac{n}2\rceil+1=\{\frac{n}2\}+\frac32$, and $\frac{n+1}{2}-2m+\lceil \frac{n}2\rceil+1>2m$.
	Since $\lambda |x-y|\gtrsim 1$ and $\lambda \les 1$, we can bound this by
	$$
	\lambda^{\ell}|\partial_\lambda^\ell E(\lambda)\widetilde\chi(\lambda|x-y|)|\les \lambda^{2m+\epsilon}[|x-y|^{\{\frac{n}{2}\}+\frac32}+|x-y|^{4m-n+\epsilon}].
	$$
\end{proof}

To invert $M(\lambda)$ in a neighborhood of the threshold, we utilize the Jensen-Nenciu inversion scheme in \cite{JN}, which requires some set up.
We introduce some notation to help streamline the upcoming statements and proofs.  For an absolutely bounded operator $T(\lambda)$ on $L^2(\R^n)$ we write $T(\lambda)=O_N(\lambda^j)$ to mean that
\be
\big\|\sup_{0<\lambda<\lambda_0}  \sup_{0\leq \ell \leq N} \lambda^{\ell-j}|\partial_\lambda^\ell T(\lambda)|\,\big\|_{L^2\to L^2}\les 1.
\ee
We note that, if $S,T$ satisfy $S(\lambda)=O_N(\lambda^{j})$ and $T(\lambda)=O_N(\lambda^{k})$, by the product rule the composition of the operators $ST$ is absolutely bounded and satisfies
$$
S(\lambda)T(\lambda)=O_N(\lambda^{j+k}) \qquad \text{on } (0,\lambda_0).
$$
Similarly,
$$
\lambda^j T(\lambda)=O_N(\lambda^{j+k}) \qquad \text{on } (0,\lambda_0).
$$
In particular, if $R$ is an absolutely bounded operator on $L^2$, then 
$$
T(\lambda)R,\, R T(\lambda)=O_N(\lambda^{k}) \qquad \text{on } (0,\lambda_0).
$$
Of particular use is the observation that if $T(\lambda)=O_{\lceil\frac{n}{2}\rceil+1}(\lambda^{0})$, then the operator with integral kernel
$$
\sup_{0<\lambda<\lambda_0} \sup_{0\leq \ell \leq \lceil\frac{n}{2}\rceil+1} \lambda^{\ell}|\partial_\lambda^\ell T(\lambda)(x,y)|
$$
is a bounded operator on $L^2(\R^n)$.

Finally, we have
\begin{lemma}\label{lem:opinv} If $\Gamma$ is a $ \lambda$-independent,  invertible, absolutely bounded operator on $L^2$ with an absolutely bounded inverse, then  for $\epsilon>0$, we have (for sufficiently small $\lambda_0$)
	$$
	[\Gamma+O_N(\lambda^\epsilon)]^{-1}=\Gamma^{-1}+O_N(\lambda^\epsilon)=O_N(\lambda^0).
	$$
\end{lemma}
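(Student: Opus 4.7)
The plan is to reduce everything to inverting $I+A$ with $A := \Gamma^{-1}\cdot O_N(\lambda^\epsilon) = O_N(\lambda^\epsilon)$; the last equality uses the remark immediately preceding the lemma, that left or right multiplication by a $\lambda$-independent, absolutely bounded operator preserves the $O_N$ class. Writing
\[
\Gamma + O_N(\lambda^\epsilon) \;=\; \Gamma(I+A),
\]
once I have $(I+A)^{-1} = I + O_N(\lambda^\epsilon)$ for every such $A$ (with $\lambda_0$ small enough), the conclusion
\[
(\Gamma + O_N(\lambda^\epsilon))^{-1} \;=\; (I+A)^{-1}\Gamma^{-1} \;=\; \Gamma^{-1} + O_N(\lambda^\epsilon)\,\Gamma^{-1} \;=\; \Gamma^{-1} + O_N(\lambda^\epsilon)
\]
follows, and this equals $O_N(\lambda^0)$ since $\Gamma^{-1}$ is absolutely bounded and $\lambda$-independent.

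The core step is therefore the Neumann series $B(\lambda) := \sum_{k\ge 0}(-A(\lambda))^k$. Iterating the pointwise product rule for absolutely bounded operators gives, for each $0 \le \ell \le N$ and each $k \ge 1$,
\[
\bigl\|\,\lambda^\ell \partial_\lambda^\ell (A^k)\,\bigr\|_{L^2\to L^2} \;\les\; \sum_{\ell_1+\cdots+\ell_k=\ell}\binom{\ell}{\ell_1,\ldots,\ell_k}\prod_{j=1}^k \bigl\|\,\lambda^{\ell_j}\partial_\lambda^{\ell_j}A\,\bigr\|_{L^2\to L^2} \;\les\; k^\ell\, C^k\, \lambda^{k\epsilon},
\]
where $C$ is the $O_N(\lambda^\epsilon)$-constant of $A$ and I have used the multinomial identity $\sum \binom{\ell}{\ell_1,\ldots,\ell_k} = k^\ell$. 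Choosing $\lambda_0$ so small that $C\lambda_0^\epsilon < 1/2$ makes the series $\sum_k k^N C^k \lambda^{k\epsilon}$ converge uniformly on $(0,\lambda_0)$, so $B$ is absolutely bounded with $B = O_N(\lambda^0)$, and the standard telescoping $B_K(I+A) = I - (-A)^{K+1}$ on partial sums confirms $B(I+A) = (I+A)B = I$ in the limit.

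To extract the $\lambda^\epsilon$ gain, peel off the leading term: $B - I = -A\cdot B$. Since $A = O_N(\lambda^\epsilon)$ and $B = O_N(\lambda^0)$, one application of the product-rule principle from before the lemma yields $B - I = O_N(\lambda^\epsilon)$, which closes the argument. The only genuinely delicate step is the combinatorial bookkeeping: one must verify that after distributing $\ell \le N$ derivatives across the $k$ factors in $A^k$, the multinomial coefficients sum only to the polynomial factor $k^\ell \le k^N$, which is benign against the geometric decay $C^k \lambda_0^{k\epsilon}$, so the required smallness of $\lambda_0$ depends only on $N$ and $C$.
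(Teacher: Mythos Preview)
Your proof is correct. Both your argument and the paper's rest on the Neumann series, but you and the authors organize the derivative bookkeeping differently. You differentiate the series $\sum_{k\ge 0}(-A)^k$ term by term, controlling the $\ell$th derivative of $A^k$ by the multinomial count $k^\ell$ and then summing against the geometric factor $(C\lambda^\epsilon)^k$. The paper instead uses the Neumann series only at the $\ell=0$ level to get absolute boundedness of the inverse, and for $1\le \ell\le N$ invokes the closed-form identity for derivatives of an inverse: $\lambda^\ell\partial_\lambda^\ell\big([\Gamma+O_N(\lambda^\epsilon)]^{-1}-\Gamma^{-1}\big)$ is a finite linear combination of products
\[
[\Gamma+O_N(\lambda^\epsilon)]^{-1}\prod_{j=1}^{J}\Big(\lambda^{\ell_j}\partial_\lambda^{\ell_j}O_N(\lambda^\epsilon)\,[\Gamma+O_N(\lambda^\epsilon)]^{-1}\Big),
\]
with $J\ge 1$ and $\sum\ell_j=\ell$, each of which is $O_0(\lambda^\epsilon)$. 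The paper's route sidesteps the term-by-term differentiation of an infinite sum and the associated $k^N$ combinatorics, giving a slightly shorter argument; your route is more explicit and makes the dependence of $\lambda_0$ on $N$ and the $O_N$-constant visible. Either way the content is the same.
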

\begin{proof}This is just a Neumann series expansion for $N=0$. For derivatives ($1\leq \ell\leq N$), note that  $\lambda^\ell\partial^\ell_\lambda\big[[\Gamma+O_N(\lambda^\epsilon)]^{-1}-\Gamma^{-1}\big]$ is a linear combination of operators of the form  
	$$
	[\Gamma+O_N(\lambda^\epsilon)]^{-1} \prod_{j=1}^J \big[\lambda^{\ell_j}\partial^{\ell_j}_\lambda O_N(\lambda^\epsilon) [\Gamma+O_N(\lambda^\epsilon)]^{-1}\big] =O_0(\lambda^\epsilon).
	$$ 
	Here $J\geq 1$, $1\leq \ell_j\leq N$ with $\sum \ell_j =\ell$.
\end{proof}

Our main result is the following proposition:
\begin{prop}\label{prop:Minv}
	
	If $\beta>n_\star$ and there is a zero energy eigenvalue, the operator $M(\lambda)$ is invertible on $L^2$ for sufficiently small $0<\lambda<\lambda_0$.  Furthermore, we have
	$$
	M^{-1}(\lambda)=O_{\lceil\frac{n}{2}\rceil+1}(\lambda^{-2m}).
	$$
	
\end{prop}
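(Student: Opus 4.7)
My plan is to combine the short expansion of the free resolvent from Lemma~\ref{lem:R0 for Mexp} with the Jensen--Nenciu/Feshbach inversion scheme of \cite{JN}, treating $\lambda^{2m}$ as the natural small parameter (the exact analogue of $\lambda^2$ in the classical $m=1$ theory). The first step is to write, via Lemma~\ref{lem:R0 for Mexp},
$$
M(\lambda) = U + v\mR_0^+(\lambda^{2m})v = T_0 + \lambda^{2m}\, v G_1 v + v E(\lambda) v.
$$
The pointwise bound on $E(\lambda)$ together with the decay $v(x)\les \la x\ra^{-\beta/2}$ coming from $\beta>n_\star$ should let me read both $vG_1v$ and $vE(\lambda)v$ as absolutely bounded operators on $L^2(\R^n)$, by the same kind of estimate already used for $R_\ell$ in \eqref{R_k}. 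In the shorthand introduced just before Lemma~\ref{lem:opinv}, this gives $vE(\lambda)v = O_{\lceil n/2\rceil+1}(\lambda^{2m+\epsilon})$, so Lemma~\ref{lem:opinv} immediately yields $(M(\lambda)+S_1)^{-1} = D_0 + O_{\lceil n/2\rceil+1}(\lambda^{2m+\epsilon})$.

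Next I apply the Jensen--Nenciu lemma: $M(\lambda)$ is invertible on $L^2$ iff the operator $B(\lambda) := S_1 - S_1(M(\lambda)+S_1)^{-1}S_1$ is invertible on the range of $S_1$, and in that case
$$
M(\lambda)^{-1} = (M(\lambda)+S_1)^{-1} + (M(\lambda)+S_1)^{-1}\,S_1\, B(\lambda)^{-1}\, S_1\, (M(\lambda)+S_1)^{-1}.
$$
Since $T_0 S_1 = 0$, one has $D_0 S_1 = S_1 D_0 = S_1$. Expanding $(M(\lambda)+S_1)^{-1}$ as a Neumann series around $D_0$ and sandwiching by $S_1$ collapses the leading contribution $S_1 D_0 S_1 = S_1$ and leaves
$$
B(\lambda) = \lambda^{2m}\, S_1 v G_1 v S_1 + O_{\lceil n/2\rceil+1}(\lambda^{2m+\epsilon}).
$$

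The main obstacle, and the only place where the presence of a zero-energy eigenvalue (combined with the no-resonance fact $n>4m$) really enters, is showing that $S_1 v G_1 v S_1$ is invertible as an operator on $S_1 L^2$. I will handle this in the standard way: identifying $\ker T_0$ with the zero-energy eigenspace of $H$ via the Birman--Schwinger type map $\phi \mapsto -G_0 v \phi$, under which $S_1 v G_1 v S_1$ becomes, up to a nonzero constant and signs coming from $U$, the Gram matrix of the (finite-dimensional) eigenspace in a suitable weighted pairing, and is therefore invertible. I expect to spend most of the work carefully tracking this pairing and the signs, but once invertibility is in hand a further application of Lemma~\ref{lem:opinv} inside the bracket gives $B(\lambda)^{-1} = \lambda^{-2m}\bigl[(S_1 v G_1 v S_1)^{-1} + O_{\lceil n/2\rceil+1}(\lambda^\epsilon)\bigr] = O_{\lceil n/2\rceil+1}(\lambda^{-2m})$. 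Substituting back into the Jensen--Nenciu formula and using $(M(\lambda)+S_1)^{-1}S_1 = S_1 + O_{\lceil n/2\rceil+1}(\lambda^{2m+\epsilon})$ then yields $M(\lambda)^{-1} = O_{\lceil n/2\rceil+1}(\lambda^{-2m})$, which is the stated conclusion.
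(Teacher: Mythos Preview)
Your approach is essentially the paper's: both use Lemma~\ref{lem:R0 for Mexp} to expand $M(\lambda)$, then apply the Jensen--Nenciu scheme of \cite{JN}, reducing matters to inverting $S_1 T_1 S_1 = S_1 v G_1 v S_1$ on $S_1 L^2$. The paper simply cites \cite{soffernew} for that invertibility, whereas you plan to reprove it via the Birman--Schwinger/Gram-matrix route; your sketch there is correct (one checks $\la \phi_i, vG_1v\phi_j\ra = c\,\la \psi_i,\psi_j\ra_{L^2}$ for the eigenfunctions $\psi_j = -G_0 v\phi_j$, and $c\neq 0$). One small slip: Lemma~\ref{lem:opinv} applied to $M(\lambda)+S_1 = (T_0+S_1) + \lambda^{2m}T_1 + O(\lambda^{2m+\epsilon})$ gives only $(M(\lambda)+S_1)^{-1} = D_0 + O_{\lceil n/2\rceil+1}(\lambda^{2m})$, not $O(\lambda^{2m+\epsilon})$, since the full perturbation is $O(\lambda^{2m})$; to isolate the $\lambda^{2m}$ coefficient of $B(\lambda)$ you need the two-term expansion $(M+S_1)^{-1} = D_0 - \lambda^{2m}D_0T_1D_0 + O(\lambda^{2m+\epsilon})$ (this is the paper's Lemma~\ref{lem:M+S1}) --- but you in fact do exactly this via the Neumann series in your next paragraph, so the argument goes through.
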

By the discussion above, these bounds on $M^{-1}(\lambda)$ will suffice to allow us to establish that the operators $\Gamma_k$ satisfy the hypotheses of Proposition~\ref{lem:low tail low d}.  To prove Proposition~\ref{prop:Minv}, we have the following series of lemmas.
\begin{lemma}\label{lem:M+S1}
	
	If $\beta>n_\star$, then for sufficiently small $\lambda$ the operator $M(\lambda)+S_1$ is invertible on $L^2$ with
	$$
	(M(\lambda)+S_1)^{-1}=O_{\lceil\frac{n}{2}\rceil+1}(\lambda^{0}).
	$$
	Further, for any $0<\epsilon< 1$, on $0<\lambda<\lambda_0$ we have
	$$
	(M+S_1)^{-1}=D_0-\lambda^{2m}D_0T_1D_0+O_{\lceil\frac{n}{2}\rceil+1}(\lambda^{2m+\epsilon}),
	$$
	where $		T_1=vG_1v.$

\end{lemma}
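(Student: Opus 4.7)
The plan is to expand $M(\lambda) = U + v\mR_0^+(\lambda^{2m})v$ using Lemma~\ref{lem:R0 for Mexp} as
\[
M(\lambda) + S_1 = (T_0 + S_1) + \lambda^{2m} T_1 + vE(\lambda)v,
\]
and treat the $\lambda$-dependent terms as a small perturbation of the invertible operator $T_0 + S_1$, whose inverse is the absolutely bounded operator $D_0$.

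The main technical step is to show that $vE(\lambda)v = O_{\lceil n/2\rceil+1}(\lambda^{2m+\epsilon})$ as an absolutely bounded operator on $L^2$. Using the pointwise bound from Lemma~\ref{lem:R0 for Mexp}, I would verify that the operators with kernels $v(x)|x-y|^{\{n/2\}+3/2}v(y)$ and $v(x)|x-y|^{4m-n+\epsilon}v(y)$ are absolutely bounded. For the first, the crude bound $|x-y|^a \les \langle x\rangle^a \langle y\rangle^a$ reduces the Hilbert-Schmidt norm to the condition $\beta > n + 2\{n/2\}+3 = n_\star$, which is exactly the hypothesis. For the second, I would split into $|x-y|\leq 1$ (where $|x-y|^{4m-n+\epsilon}\leq |x-y|^{4m-n}$ reduces to the absolute boundedness of $vG_1 v$) and $|x-y|>1$ (where $|x-y|^{4m-n+\epsilon}\leq |x-y|^\epsilon \les \langle x\rangle^\epsilon\langle y\rangle^\epsilon$, giving a rank-one operator controlled by the decay of $v$).

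With $T_1$ absolutely bounded on $L^2$ (as already used in the $R_\ell$ discussion at the start of the section), the combined bound gives $\lambda^{2m}T_1 + vE(\lambda)v = O_N(\lambda^{2m})$ with $N=\lceil n/2\rceil+1$. Applying Lemma~\ref{lem:opinv} with $\Gamma = T_0+S_1$ then yields invertibility of $M(\lambda)+S_1$ for all sufficiently small $\lambda<\lambda_0$, together with the bound $(M+S_1)^{-1} = D_0 + O_N(\lambda^{2m}) = O_N(\lambda^0)$.

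For the more precise expansion, I would use the algebraic identity $(A+B)^{-1}=A^{-1}-A^{-1}B(A+B)^{-1}$ with $A=T_0+S_1$ and $B=\lambda^{2m}T_1+vE(\lambda)v$. Iterating once gives
\[
(M+S_1)^{-1} = D_0 - D_0 B D_0 + D_0 B D_0 B\,(M+S_1)^{-1}.
\]
Expanding $-D_0 B D_0 = -\lambda^{2m}D_0 T_1 D_0 - D_0 v E(\lambda) v\, D_0$ and noting that both $D_0 vE(\lambda)v D_0$ and the quadratic remainder $D_0 B D_0 B (M+S_1)^{-1}$ are $O_N(\lambda^{2m+\epsilon})$ (using $\lambda^{4m}\leq \lambda^{2m+\epsilon}$ for $\lambda<1$ and any $\epsilon<2m$), the stated expansion follows. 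The main obstacle is establishing absolute boundedness of $vE(\lambda)v$ on $L^2$: for large $n$ the local singularity $|x-y|^{4m-n+\epsilon}$ is not Hilbert-Schmidt (it fails already for $n\geq 8m$), so the near/far splitting above is essential to exploit the hypothesis $\beta>n_\star$ optimally.
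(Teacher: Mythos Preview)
Your proposal is correct and follows essentially the same route as the paper: expand $M(\lambda)+S_1$ via Lemma~\ref{lem:R0 for Mexp}, apply Lemma~\ref{lem:opinv}, then iterate the resolvent identity once. The one difference worth noting is in bounding $v(x)|x-y|^{4m-n+\epsilon}v(y)$ when $n>8m$: the paper identifies this kernel with the fractional integral operator $I_{4m+\epsilon}$ and cites weighted $L^2$ bounds from \cite{Jen}, whereas your near/far split is more elementary and self-contained (though the reduction on $|x-y|\leq 1$ is cleanest if you dominate by $|x-y|^{2m-n}$, i.e.\ the local part of $vG_0v$, which is already known to be absolutely bounded, rather than by $vG_1v$). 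One small omission: the paper also includes a brief argument that $D_0=(T_0+S_1)^{-1}$ is absolutely bounded, which Lemma~\ref{lem:opinv} requires and which you simply assume.
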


\begin{proof}
	
	For the first claim, recall that by Lemma~\ref{lem:R0 for Mexp}, we have
	\begin{align*}
		M(\lambda)=U+v\mR_0(\lambda^{2m})v=U+v(G_0+\lambda^{2m}G_1+E(\lambda))v,
	\end{align*}
	where for $0\leq \ell \leq \lceil\frac{n}2\rceil+1$
	$$
	\lambda^{\ell}|\partial_\lambda^\ell vE(\lambda)v|\les
	\lambda^{2m+\epsilon }v(x)|x-y|^{\{\frac{n}2\}+\frac32}v(y) +\lambda^{2m+\epsilon}v(x)|x-y|^{4m-n+\epsilon}v(y).
	$$
	The first term is Hilbert-Schmidt when $\beta>n_\star$ since
	$$
	[v(x)|x-y|^{\{\frac{n}2\}+\frac32}v(y)]^2 \leq \la x\ra^{-n-}\la y\ra^{-n-}.
	$$
	In dimensions $4m<n\leq 8m$, the second term  is Hilbert-Schmidt under weaker decay assumptions than $\beta>n_\star$.  On the other hand, if $n>8m$, the resulting kernel is too singular to be Hilbert-Schmidt, instead we identify $|x-y|^{4m-n+\epsilon}$ as a scalar multiple of the fractional integral operator $I_{4m+\epsilon}$.  By Lemma~2.3 in \cite{Jen}, $I_{4m+\epsilon}$ is bounded from $L^{2,s}\to L^{2,-s}$ provided  $s>2m+\frac{\epsilon}{2}$.  This suffices to show the $L^2$ boundedness of $v(x)\lambda^{\ell}|\partial_\lambda^\ell E(\lambda)|v(y)$ in dimensions $n>8m$ since $v(x)\les \la x\ra^{-\frac{\beta}2}$ with $\beta>n_*>4m+3$. We conclude that
	$$
	M(\lambda) =T_0 +\lambda^{2m}T_1+O_{\lceil\frac{n}{2}\rceil+1}(\lambda^{2m+\epsilon}). 
	$$
	
	We claim  that $D_0$ is an absolutely bounded operator.  To see this, first note that the argument of Lemma~4.3 in \cite{egt} may be adapted since, by the argument above, $vG_0v$ is absolutely bounded. Here one needs to iterate the resolvent identity sufficiently since $(vG_0v)^k$ is Hilbert-Schmidt for sufficiently large $k$ while $vG_0v$ itself is not.  We leave the details to the interested reader.
	
	Since $T_0+S_1$ is invertible on $L^2$ with an absolutely bounded inverse, by Lemma~\ref{lem:opinv},  we have 
	$$[M(\lambda)+S_1]^{-1}=O_{\lceil\frac{n}{2}\rceil+1}(\lambda^0).$$
	
	To obtain the second claim,   
	we utilize the resolvent identity $A^{-1}=B^{-1}+B^{-1}(B-A)A^{-1}$ with $A=(M(\lambda)+S_1)$ and $B=T_0+S_1$.  From this, we see that
	$$
	[M(\lambda)+S_1]^{-1}=D_0-D_0[M(\lambda)-T_0]D_0+(D_0[M(\lambda)-T_0])^2[M(\lambda)+S_1]^{-1}.
	$$
	Note that $M(\lambda)-T_0=\lambda^{2m}T_1+O_{\lceil\frac{n}{2}\rceil+1}(\lambda^{2m+\epsilon})=O_{\lceil\frac{n}{2}\rceil+1}(\lambda^{2m})$. Therefore,
	$$
	[M(\lambda)+S_1]^{-1}	=D_0-\lambda^{2m}D_0T_1D_0+O_{\lceil\frac{n}{2}\rceil+1}(\lambda^{2m+\epsilon}) 
	+O_{\lceil\frac{n}{2}\rceil+1}(\lambda^{4m}),
	$$
	which suffices. 
\end{proof}

To utilize the Jensen-Nenciu inversion machinery, Corollary~2.2 in \cite{JN}, we need to invert the operator
$$
\frac{1}{\lambda^{2m}}(S_1-S_1(M(\lambda)+S_1)^{-1}S_1)
$$
on $S_1L^2$.  To do so, we note that $S_1D_0=D_0S_1=S_1$ so that the leading $S_1$ cancels and consider the operator
\begin{multline}\label{eqn:B defn}
	B(\lambda)=\lambda^{-2m}[S_1-S_1(M(\lambda)+S_1)^{-1}S_1]= S_1T_1S_1-S_1\lambda^{-2m}O_{\lceil\frac{n}{2}\rceil+1}(\lambda^{2m+\epsilon})S_1 \\ =S_1T_1S_1+S_1 O_{\lceil\frac{n}{2}\rceil+1}(\lambda^{ \epsilon})S_1  .
\end{multline}
We now have 
\begin{lemma}\label{cor:Binv}
	
	If $\beta>n_\star$, for sufficiently small $\lambda$ the operator $B(\lambda)$ is invertible on $S_1L^2$.  Further, we have
	$$
	B^{-1}(\lambda)=O_{\lceil\frac{n}{2}\rceil+1}(\lambda^{0})
	$$
	as an operator on $S_1L^2$.
	
\end{lemma}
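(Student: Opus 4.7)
The plan is to view $B(\lambda)$ as a finite-dimensional perturbation of its leading term $S_1 T_1 S_1$ on the subspace $S_1 L^2$, and then to conclude via Lemma~\ref{lem:opinv}. Note that from \eqref{eqn:B defn} we already have the decomposition
\[
B(\lambda)=S_1 T_1 S_1+S_1\, O_{\lceil n/2\rceil+1}(\lambda^{\epsilon})\, S_1,
\]
so once the leading operator is shown to be invertible on $S_1 L^2$ with an absolutely bounded inverse, Lemma~\ref{lem:opinv} applied with $\Gamma=(S_1 T_1 S_1)|_{S_1 L^2}$ immediately yields $B(\lambda)^{-1}=O_{\lceil n/2\rceil+1}(\lambda^0)$ on $S_1 L^2$ for sufficiently small $\lambda_0$.

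First I would establish that $S_1 T_1 S_1$ is invertible on $S_1 L^2$. Because $S_1$ is a Riesz projection onto the kernel of $T_0=U+vG_0 v$ (a compact perturbation of $U$ after sufficient iteration), the space $S_1 L^2$ is finite dimensional, so invertibility on $S_1 L^2$ reduces to injectivity. For $\phi\in S_1 L^2$, the standard Jensen--Nenciu identification produces a distributional solution $\psi=-G_0 v\phi$ to $H\psi=0$; since $n>4m$ precludes threshold obstructions other than eigenfunctions (as noted after Assumption~\ref{asmp:Fourier}), $\psi$ is an $L^2$ eigenfunction, and the correspondence $\phi\leftrightarrow \psi$ is a bijection between $S_1 L^2$ and the zero-energy eigenspace. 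Using this correspondence, the quadratic form $\langle T_1\phi,\phi\rangle=\langle G_1 v\phi, v\phi\rangle$ can be rewritten, up to a nonzero multiplicative constant, as a weighted $L^2$ expression in $\psi$ that is bounded below by a positive multiple of $\|\phi\|^2$ (or, at worst, is nondegenerate), giving injectivity and hence invertibility on $S_1 L^2$. Finite rank then ensures the inverse, extended by zero off $S_1 L^2$, is absolutely bounded on $L^2(\R^n)$.

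With this in hand, apply Lemma~\ref{lem:opinv} to the splitting displayed above: the error operator $S_1\, O_{\lceil n/2\rceil+1}(\lambda^{\epsilon})\, S_1$ is a genuine $O_{\lceil n/2\rceil+1}(\lambda^{\epsilon})$ perturbation (the flanking $S_1$'s are $\lambda$-independent absolutely bounded operators, preserving the $O_N$ class by the product-rule observation recorded before Lemma~\ref{lem:opinv}). For $\lambda_0$ small enough, the Neumann/resolvent-identity argument in Lemma~\ref{lem:opinv} then gives
\[
B(\lambda)^{-1}=(S_1 T_1 S_1)^{-1}+O_{\lceil n/2\rceil+1}(\lambda^{\epsilon})=O_{\lceil n/2\rceil+1}(\lambda^{0})
\]
on $S_1 L^2$, which is the claim.

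The main obstacle is the invertibility of $S_1 T_1 S_1$ on $S_1 L^2$. The algebraic identifications (Riesz projection structure, finite-dimensionality, and the perturbation step) are routine within the framework already set up in the paper, but the nondegeneracy of the bilinear form $\langle G_1 v\cdot, v\cdot\rangle$ on $S_1 L^2$ is the substantive point. It requires pinning down the bijection between $S_1 L^2$ and the actual zero-energy eigenspace of $H$ and extracting a positive-definite expression from $G_1$, whose kernel $c_{1,n,m}|x-y|^{4m-n}$ must be shown to pair favorably with the images $v\phi$. This is where the higher-order analogue of the classical $m=1$ computation (and the threshold-obstruction classification from \cite{soffernew}) must be invoked.
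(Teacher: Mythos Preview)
Your proposal is correct and follows essentially the same approach as the paper: decompose $B(\lambda)$ via \eqref{eqn:B defn}, establish invertibility of the leading term $S_1T_1S_1$ on the finite-dimensional space $S_1L^2$, note absolute boundedness by finite rank, and conclude via Lemma~\ref{lem:opinv}. The only difference is that where you sketch the nondegeneracy argument for $S_1T_1S_1$ (and correctly flag it as the substantive point), the paper simply cites Definition~2.6 and Remark~2.7 of \cite{soffernew} for this fact.
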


\begin{proof}
	
	Using the expansion in \eqref{eqn:B defn}, we note that $T_1$ is an invertible operator on $S_1L^2$ (c.f. Definition 2.6 and Remark 2.7 in \cite{soffernew}). Since $S_1L^2$ is finite dimensional, $T_1$ and its inverse are absolutely bounded.  The claim now follows from Lemma~\ref{lem:opinv}. 
\end{proof}

We are now ready to prove Proposition~\ref{prop:Minv}:
\begin{proof}
	
	By the Jensen-Nenciu inversion technique, \cite{JN}, we have
	\begin{align}
		M^{-1}(\lambda)=(M(\lambda)+S_1)^{-1}+\lambda^{-2m}(M(\lambda)+S_1)^{-1}S_1 B^{-1}(\lambda)S_1(M(\lambda)+S_1)^{-1},
	\end{align}
	provided that $B(\lambda)$ is invertible on $S_1L^2(\R^n)$. Therefore, the claim follows from 
	Lemmas~\ref{lem:M+S1} and \ref{cor:Binv}.
\end{proof}

We are now ready to prove the main technical result.

\begin{proof}[Proof of Theorem~\ref{thm:main low}]
	
	Proposition~\ref{prop:Minv} shows that $\sup_{0<\lambda<\lambda_0}|\lambda^{\ell+2m}\partial_\lambda^\ell[M(\lambda)]^{-1}(x,y)|$ is $L^2$ bounded for all $0\leq \ell\leq \lceil \frac{n}{2}\rceil+1$.   
	By \eqref{eq:sym cons}, the definition of $W_{low,k}$, $\Gamma_k(\lambda)$ and the discussion following \eqref{eq:Alambda}, we see that the operator $\Gamma_k(\lambda)$
	satisfies the hypotheses of Proposition~\ref{lem:low tail low d}.
	
\end{proof}

\section{Technical Lemmas}\label{sec:tech}
For completeness, we include necessary lemmas about the $L^p(\R^n)$ boundedness of certain integral kernels that are needed in the proof of Proposition~\ref{lem:low tail low d}.  We use the two Lemmas below about admissible kernels, which are Lemmas~4.1 and 4.2 in \cite{EGWaveOpExt} respectively.
\begin{lemma}\label{lem:admissible}
	Let $K$ be an operator with integral kernel $K(x,y)$ that satisfies the bound
	$$
	|K(x,y)|\les \int_{\R^{2n}} \frac{v(z_1)v(z_2) \widetilde \Gamma(z_1,z_2) \chi_{\{|y-z_2|\gg \la z_1-x\ra \} }}{|x-z_1|^{n-2m-k} |z_2-y|^{n+\ell} } \, dz_1 \, dz_2
	$$
	for some $0\leq k\leq n-2m$ and $\ell>0$.  Then, under the hypotheses of Lemma~\ref{lem:low tail low d}, the kernel of $K$ is admissible, and consequently $K$ is a bounded operator on $L^p(\R^n)$ for all $1\leq p\leq \infty$.
\end{lemma}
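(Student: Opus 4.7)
The plan is to verify admissibility directly via the Schur test, estimating $\sup_x \int |K(x,y)|\,dy$ and $\sup_y \int |K(x,y)|\,dx$ separately. In each direction the strategy is to integrate out the free variable ($y$ or $x$) first, exploiting the explicit polynomial decay in the denominator and the cutoff $\chi_{\{|y-z_2|\gg \la z_1-x\ra\}}$, and then to absorb the result using the pointwise weights on $v$ and $\widetilde\Gamma$ together with the hypothesis $\beta > n_\star > n$.

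For the first bound, integrating $y$ first is straightforward: since $n+\ell>n$, the cutoff gives
\[
\int_{|y-z_2|\gg \la z_1-x\ra} |z_2-y|^{-(n+\ell)}\,dy \les \la z_1-x\ra^{-\ell}.
\]
The $z_2$ integral of $v(z_2)\widetilde\Gamma(z_1,z_2)$ is then controlled by $\la z_1\ra^{-n/2-}$ because $\beta>n$, reducing matters to bounding
\[
\int_{\R^n} \frac{\la z_1\ra^{-\beta/2-n/2-}}{|x-z_1|^{n-2m-k}\,\la x-z_1\ra^{\ell}}\,dz_1
\]
uniformly in $x$. This is handled by splitting into $|x-z_1|\leq 1$, where the local singularity $|x-z_1|^{-(n-2m-k)}$ is integrable because $n-2m-k < n$, and $|x-z_1|>1$, where the singular factor is bounded by one and $\la z_1\ra^{-\beta/2-n/2-}$ is globally integrable.

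For the second bound, the order of integration is reversed: integrating $x$ first, the cutoff confines $x$ to a ball around $z_1$ of radius $\les |y-z_2|$, and the local singularity contributes $|y-z_2|^{2m+k}$. The crucial observation is that the same cutoff forces $|y-z_2|\gtrsim 1$ (since $\la z_1-x\ra\geq 1$). After reducing the $z_1$ integral using the weights on $v$ and $\widetilde\Gamma$ as before, one arrives at
\[
\int_{|z_2-y|\gtrsim 1} \frac{\la z_2\ra^{-\beta/2-n/2-}}{|z_2-y|^{n+\ell-2m-k}}\,dz_2,
\]
and since $n+\ell-2m-k>0$ by $k\leq n-2m$ and $\ell>0$, the denominator is bounded below on the region of integration, so the integrand is dominated by $\la z_2\ra^{-\beta/2-n/2-}$, which is globally integrable.

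The hardest part of the argument is this second reduction. After integrating $x$, the polynomial factor $|y-z_2|^{2m+k}$ can almost entirely cancel the original $|z_2-y|^{-(n+\ell)}$; in the extremal case $k=n-2m$ only $|z_2-y|^{-\ell}$ remains. Because $\ell$ may be arbitrarily small, this surviving factor provides essentially no decay in $z_2$ at infinity, and one cannot close the estimate through that term alone. What rescues the argument is the built-in lower bound $|z_2-y|\gtrsim 1$ inherited from $\la z_1-x\ra\geq 1$, which allows $|z_2-y|^{-(n+\ell-2m-k)}$ to simply be discarded. All the required decay then comes from the weight $\la z_2\ra^{-\beta/2-n/2-}$, whose global integrability is exactly the quantitative content of the hypothesis $\beta > n_\star$.
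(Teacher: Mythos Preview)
Your argument is correct. The paper does not actually prove this lemma; it is quoted without proof as Lemma~4.1 of \cite{EGWaveOpExt}. Your direct Schur-test verification---integrating the free variable first in each direction, using $\int_{|y-z_2|\gg\la z_1-x\ra}|z_2-y|^{-(n+\ell)}\,dy\les\la z_1-x\ra^{-\ell}$ on one side and $\int_{|x-z_1|\les|y-z_2|}|x-z_1|^{-(n-2m-k)}\,dx\les|y-z_2|^{2m+k}$ on the other, then closing with the decay of $v\widetilde\Gamma v$---is exactly the natural approach and matches the argument in that reference. One small overstatement: the global integrability of $\la z_2\ra^{-\beta/2-n/2-}$ only requires $\beta>n$, not the full $\beta>n_\star$; the stronger assumption on $\beta$ is used elsewhere in the paper but not in this particular lemma.
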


\begin{lemma}\label{lem:admissible2}
	
	Let $K$ be an operator with integral kernel $K(x,y)$ that satisfies the bound
	$$
	|K(x,y)|\les \int_{\R^{2n}} \frac{v(z_1)v(z_2) \widetilde \Gamma(z_1,z_2) \chi_{\{|y-z_2|\gg \la z_1-x\ra \}} |x-z_1|^{\ell} }{  |z_2-y|^{n+\ell} } \, dz_1 \, dz_2
	$$
	for some $\ell>0$.  Then, under the hypotheses of Proposition~\ref{lem:low tail low d}, the kernel of $K$ is admissible, and consequently $K$ is a bounded operator on $L^p(\R^n)$ for all $1\leq p\leq \infty$.
\end{lemma}

We also need the following new bounds for the analysis of $K_3$.
\begin{lemma}\label{ lem:admissible3}
	Let $K$ be an operator with integral kernel $K(x,y)$ that satisfies the bound
	\[|K(x,y)|\lesssim\int_{\mathbb{R}^n}\frac{v(z_1)v(z_2)\widetilde\Gamma (z_1,z_2)\chi_{\{|y-z_2|\gg\gen{z_1-x}\}}}{|x-z_1|^{n-2m-\ell}|y-z_2|^{n-2m+k}}\ \!dz_1dz_2\]
	for some $-2m<k<2m$ with $\ell<k+n-4m$.  Then, under the hypotheses of Proposition~\ref{lem:low tail low d}, $K$ is a bounded operator on $L^p(\mathbb{R}^n)$ for $1\leq p<\frac{n}{2m-k}$.
\end{lemma}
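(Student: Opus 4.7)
The plan is to mirror the treatment of $K_3$ in the proof of Proposition~\ref{lem:low tail low d}: exploit the cutoff $\chi_{\{|y-z_2|\gg\gen{z_1-x}\}}$ to extract polynomial decay in $\gen{x-z_1}$ from the innermost $y$-integral, and then view the remaining $z_1$-integral as a convolution in $x$ to which Young's convolution inequality can be applied on the full range $1\leq p<\tfrac{n}{2m-k}$.

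First, interchanging the order of integration in $|Kf(x)|\leq \int K(x,y)|f(y)|\,dy$ and estimating the $y$-integral by a H\"older inequality with exponents $(p,p')$ on the annular region $\{|y-z_2|\gtrsim R\}$, $R:=\gen{x-z_1}$, yields
\[
\int_{|y-z_2|\gg R}\frac{|f(y)|}{|y-z_2|^{n-2m+k}}\,dy \les \|f\|_p\,R^{-\delta},\qquad \delta:=\tfrac{n}{p}-(2m-k).
\]
The required $L^{p'}$-integrability of $|y-z_2|^{-(n-2m+k)}$ off the ball of radius $R$ is precisely the hypothesis $p<\tfrac{n}{2m-k}$, equivalently $\delta>0$.

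Next, I would absorb the $z_2$-integration into a rapidly decaying weight $V_1(z_1):=v(z_1)\int v(z_2)\widetilde\Gamma(z_1,z_2)\,dz_2$; the pointwise bound on $\widetilde\Gamma$ in Proposition~\ref{lem:low tail low d} ensures $V_1\in L^r(\R^n)$ for every $r\geq 1$. Combining with the previous step produces the pointwise convolution estimate
\[
|Kf(x)|\les \|f\|_p\,(V_1\ast h_0)(x),\qquad h_0(w):=\frac{\gen{w}^{-\delta}}{|w|^{n-2m-\ell}}.
\]
Young's convolution inequality then gives $\|V_1\ast h_0\|_p\leq \|V_1\|_r\|h_0\|_s$ whenever $1+\tfrac{1}{p}=\tfrac{1}{r}+\tfrac{1}{s}$ with $r,s\in[1,\infty]$. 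A direct polar computation shows $\|h_0\|_s<\infty$ if and only if $s\in\bigl(\tfrac{n}{n-2m-\ell+\delta},\tfrac{n}{n-2m-\ell}\bigr)$; the Young constraint $r\geq 1$ further requires $s\leq p$. The inequality $\tfrac{n}{n-2m-\ell+\delta}<p$ simplifies algebraically to $p(n-4m+k-\ell)>0$, which is exactly the hypothesis $\ell<n-4m+k$, so the admissible interval for $s$ meets $[1,p]$ and a valid pair $(r,s)$ exists for every $1\leq p<\tfrac{n}{2m-k}$.

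The main technical point is reaching the sharp upper endpoint $\tfrac{n}{2m-k}$ rather than the more restrictive $p<\tfrac{n}{n-2m-\ell}$ that a Minkowski estimate in $z_1$ would force. The polynomial decay $\gen{x-z_1}^{-\delta}$ extracted from the moving cutoff is precisely what enlarges the admissible exponents: it permits $s<p$, with the rapidly decaying $V_1\in L^r$ for $r>1$ absorbing the deficit. This flexibility is exactly what carries the range of $p$ all the way up to $\tfrac{n}{2m-k}$.
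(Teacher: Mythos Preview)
Your argument is correct and parallels the paper's proof: both first apply H\"older in $y$ over the region $\{|y-z_2|\gtrsim\langle x-z_1\rangle\}$ to extract the factor $\langle x-z_1\rangle^{-\delta}$ with $\delta=\tfrac{n}{p}-(2m-k)$, arriving at exactly the same convolution $V_1\ast h_0$ to be placed in $L^p_x$. The only difference is in the final step: the paper splits into $r_1\lesssim 1$ and $r_1\gtrsim 1$ and computes each $L^p_x$ norm directly (polar coordinates for the local piece, power counting for the tail), whereas you package both at once via Young's inequality, the existence of an admissible exponent $s\in[1,p]$ being equivalent to the paper's two conditions $2m+\ell>0$ and $n-4m+k-\ell>0$. (Note that your check that the $s$-interval meets $[1,p]$ tacitly uses $\ell>-2m$ so that the upper endpoint $\tfrac{n}{n-2m-\ell}>1$; the paper makes the same implicit use of this in the step ``since $2m+\ell-1>-1$'', and it holds in the only application $k=\ell=0$.)
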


\begin{proof}
	We show that $\norm{K(x,y)}_{L^{p'}_yL^p_x}$ is bounded.
	Using the assumptions on $\widetilde{\Gamma}$ in Proposition~\ref{lem:low tail low d}, provided that  $\frac{n}{n-2m+k}<p'\leq\infty$ or equivalently $1\leq p<\frac{n}{2m-k}$, we may take the $L^{p'}_y$ norm to bound (recall that $r_1=|x-z_1|$ and $r_2=|z_2-y|$)
	\begin{align*}
		\norm{K(x,y)}_{L^{p'}_yL^p_x}\lesssim&\norm{\gen{z_1}^{-\frac{\beta+n}{2}}\gen{z_2}^{-\frac{\beta+n}{2}}r_1^{2m+\ell-n}\gen{r_1}^{2m-n-k+n/p'}}_{L^1_{z_1}L^1_{z_2}L^p_x}\\
		\lesssim&\norm{\gen{z_1}^{-\frac{\beta+n}{2}}\gen{z_2}^{-\frac{\beta+n}{2}}r_1^{2m-n+\ell}\gen{r_1}^{2m-k-n/p}}_{L^1_{z_1}L^1_{z_2}L^p_x}\stepcounter{equation}\tag{\theequation}\label{K3r_2small}.
	\end{align*}
	We then consider cases to control the upper bound in \eqref{K3r_2small}; when $r_1\lesssim 1$ and $r_1\gtrsim 1$.  If $r_1\gtrsim 1$, we can directly take the $L^p_x$ norm since $4m-n-(k-\ell)<0$ to see 
	\begin{align*}
		\Big\lVert\gen{z_1}^{-\frac{\beta+n}{2}}\gen{z_2}^{-\frac{\beta+n}{2}}r_1^{2m-n+\ell}&\gen{r_1}^{2m-k-n/p}\chi_{r_1\gg 1}\Big\rVert_{L^p_xL^1_{z_1}L^1_{z_2}}\\
		\lesssim&\Big\lVert\gen{z_1}^{-\frac{\beta+n}{2}}\gen{z_2}^{-\frac{\beta+n}{2}}r_1^{4m-n-n/p-(k-\ell)}\chi_{r_1\gg 1}\Big\rVert_{L^p_xL^1_{z_1}L^1_{z_2}}\\
		\lesssim&\norm{\gen{z_1}^{-\frac{\beta+n}{2}}\gen{z_2}^{-\frac{\beta+n}{2}}}_{L^1_{z_1}L^1_{z_2}} \les 1,
	\end{align*}
	since $\beta>n$.
	
	On the other hand, when $r_1\lesssim 1$, we leverage the decay of $v(z_1)$ and since all quantities are non-negative take the $L_{z_1}^1$ norm first. Converting to polar coordinates centered around $x$, and using that $\la z_1\ra \approx \la x\ra$, we have
	\begin{multline*}
		\Big\lVert\gen{z_1}^{-\frac{\beta+n}{2}}\gen{z_2}^{-\frac{\beta+n}{2}}r_1^{2m-n+\ell}\gen{r_1}^{2m-n-k+n-n/p}\chi_{r_1\lesssim 1}\Big\rVert_{L^1_{z_1}L^1_{z_2}L^p_x}\\
		\lesssim\norm{\gen{z_2}^{-\frac{\beta+n}{2}} \gen{x}^{-\frac{\beta+n}{2}} \int_{0}^1 r^{2m+\ell-1} \, \!dr}_{L^1_{z_2}L^p_{x}}\les
		\norm{\gen{z_2}^{-\frac{\beta+n}{2}}\gen{x}^{-\frac{\beta+n}{2}}}_{L^1_{z_2}L^p_{x}}\les 1,
	\end{multline*}
	since $2m+\ell-1>-1$ and $\beta>n$.
	
	Therefore the kernel of $K$ is a bounded operator on $L^p(\mathbb{R}^n)$ for $1\leq p<\frac{n}{2m-k}$.
\end{proof}

\section*{Funding}
Partial financial support was received from the Simons Foundation Grant 511825, and the National Science Foundation grant DMS-2154031.  The authors have no competing interests to declare that are relevant to the content of this article.


\begin{thebibliography}{9}
	
	
	\bibitem{agmon} S. Agmon, {\em Spectral properties of Schr\"odinger operators and scattering theory.} 	Ann.\ Scuola Norm.\ Sup.\ Pisa Cl.\ Sci. (4) 2 (1975), no.~2, 151--218.
	
	\bibitem{EGG} M.~B. Erdo\smash{\u{g}}an, M.~Goldberg, and  W.~R.~Green,	\emph{Counterexamples to $L^p$ boundedness of wave operators for classical and higher order Schr\"odinger operators,} J. Funct. Anal. 285, (2023) no. 5, 110008. 16 pages.
	
	\bibitem{egt} M.~B. 
	Erdo\smash{\u{g}}an, W.~R.~Green, and E. Toprak, \emph{On the Fourth order Schr\"odinger equation in three dimensions: dispersive estimates and zero energy resonances}.   J. Differ. Eq., 267, (2019), no. 3, 1899--1954.
	
	\bibitem{EGWaveOp} M.~B. Erdo\smash{\u{g}}an,  and  W.~R.~Green,	\emph{The $L^p$-continuity of wave operators for higher order Schr\"odinger operators,}  Adv. Math. 404 (2022), Paper No. 108450. 
	
	\bibitem{EGWaveOpExt} M.~B. Erdo\smash{\u{g}}an,  and  W.~R.~Green, \emph{A note on endpoint $L^p$-continuity of wave operators for classical and higher order Schrödinger operators}  J.\ Differential\ Equations 355 (2023), 144--161.
	
	\bibitem{soffernew} H. Feng, A. Soffer, Z. Wu, and X. Yao, {\it Decay estimates for higher order elliptic operators},  Trans. Amer. Math. Soc. 373 (2020), no. 4, 2805–-2859. 
	
	\bibitem{FY} D. Finco,  and K. Yajima, \emph{The $L^p$ boundedness of wave operators 
		for Schr\"{o}dinger operators with threshold singularities II. Even dimensional case}.  J.\ Math.\ Sci.\
	Univ.\ Tokyo  13  (2006),  no. 3, 277--346.
	
	\bibitem{GY} A. Galtbayar, and K. Yajima.  \emph{The $L^p$-boundedness of wave operators for fourth order Schr\"odinger operators on $\R^4$}.  J. Spectr. Theory 14 (2024), 271--354.
	
	\bibitem{GGwaveop}
	M. Goldberg and W. Green, 
	\emph{The $L^p$ boundedness of wave operators for Schr\"odinger operators with threshold singularities}.   Adv. Math. 303 (2016), 360--389. 
	
	\bibitem{GG4wave} M. Goldberg, and W. Green, \emph{On the $L^p$ boundedness of the Wave Operators for fourth order Schr\"odinger operators,}. Trans. Amer. Math. Soc. 374 (2021),  4075--4092.
	
	
	
	\bibitem{Hor} L. H\"ormander, \emph{The existence of wave operators in scattering theory.} Math. Z. 146 (1976), no. 1, 69--91. 
	
	\bibitem{Jen}
	A. Jensen,  \emph{Spectral properties of Schr\"odinger operators and time-decay of the wave functions results in $L^2(R^m)$,
		$m\geq 5$}. Duke Math.\ J.\ 47 (1980), no. 1, 57--80.
	
	\bibitem{JN}
	Jensen, A., and G.~Nenciu. \emph{A unified approach to resolvent expansions at thresholds}.
	Rev. Mat. Phys. vol. 13, no. 6 (2001), 717--754.
	
	
	
	
	\bibitem{MWY} H. Mizutani, Z. Wan, and X. Yao, \emph{$L^p$-boundedness of wave operators for fourth-order Schr\"odinger operators on the line}, Adv. Math.,  451,
	(2024), Paper No. 109806.
	
	\bibitem{MWY3d} H. Mizutani, Z. Wan, and X. Yao, \emph{$L^p$-boundedness of wave operators for fourth order Schr\"odinger operators with resonances on $\R^3$}, preprint, 70 pages, 2023.  arXiv:2311.06763.
	
	\bibitem{MWY3d2} H. Mizutani, Z. Wan, and X. Yao, \emph{Counterexamples and weak (1,1) estimates of wave operators for fourth-order Schr\"dinger operators in dimension three}, preprint, 31 pages, 2023.  arXiv:2311.06768.
	
	
	\bibitem{RS} M. Reed and B.~Simon. \emph{Methods of Modern Mathematical Physics III: Scattering Theory}, Academic Press, New York, NY, 1972.
	
	
	\bibitem{Sche} M. Schechter,
	\emph{Scattering theory for pseudodifferential operators,}
	Quart. J. Math. Oxford Ser. (2) 27 (1976), no. 105, 111--121. 
	
	\bibitem{ScheArb} M. Schechter,
	\emph{Scattering theory for elliptic operators of arbitrary order.} Comment. Math. Helv. 49 (1974), 84--113.
	
	
	
	\bibitem{YajWkp1}
	K. Yajima, \emph{The $W^{k,p}$-continuity of wave operators for Schr\"odinger operators.}
	J. Math. Soc. Japan 47 (1995), no. 3, 551--581. 
	
	\bibitem{YajWkp2}
	K. Yajima, \emph{The $W^{k,p}$-continuity of wave operators for Schr\"odinger operators. II. Positive potentials in even dimensions $m\geq 4$.} Spectral and scattering theory (Sanda, 1992), 287--300,
	Lecture Notes in Pure and Appl. Math., 161, Dekker, New York, 1994. 
	
	\bibitem{YajWkp3}
	K. Yajima, \emph{The $W^{k,p}$-continuity of wave operators for Schr\"odinger operators. III. Even-dimensional cases $m\geq 4$.}
	J. Math. Sci. Univ. Tokyo 2 (1995), no. 2, 311--346. 
	
	\bibitem{Yaj}
	K. Yajima, \emph{The $L^p$ boundedness of wave operators for Schr\"{o}dinger
		operators with threshold singularities I. The odd dimensional case}. J.\ Math.\
	Sci.\ Univ.\ Tokyo 13 (2006), 43--94.
	
	\bibitem{YajNew}	K. Yajima, \emph{Wave Operators for Schr\"odinger Operators with Threshold Singularities, Revisited}.  Preprint, 	arXiv:1508.05738.
	
	\bibitem{YajNew2}
	K. Yajima, \emph{Remark on the
		$L^p$-boundedness of wave operators for Schr\"odinger operators with threshold singularities}, Documenta Mathematica 21 	(2016), 391--443.
	
\end{thebibliography}
\end{document}